\pdfoutput=1
\pdfoutput=1
\pdfoutput=1
\pdfoutput=1
\pdfoutput=1
\documentclass[a4paper,11pt]{article}
\usepackage{calc}
\usepackage[all]{xy}
\usepackage[centertags]{amsmath}
\usepackage{latexsym}
\usepackage{amsfonts}
\usepackage{graphicx}
\usepackage{tikz}
\usepackage{cases}
\usepackage{amssymb}
\usepackage{amsthm}
\usepackage{color}
\usepackage{fancyhdr}
\usepackage[dvips]{epsfig}
\usepackage{newlfont}
\usepackage[latin1]{inputenc}
\usepackage[latin1]{inputenc}
\usepackage[english,french]{babel}
\usepackage{graphicx}
\usepackage{t1enc}
\usepackage[english,french]{babel}
\usepackage{fancybox}
\usepackage{graphicx}
\usepackage{t1enc}
\usepackage[french2]{minitoc}
\usepackage{mathrsfs}
\usepackage{amsfonts}
\usepackage{wasysym}
\usepackage{authblk}
\usepackage{hyperref}
\allowdisplaybreaks
\usepackage{float}

\fancypagestyle{plain}{ \fancyhead{}

	\rhead{\textbf{}} \rfoot{\footnotesize{\textsf{\tiny }}}
	\cfoot{\footnotesize{\textbf{ \thepage}}}} \hfuzz2pt
\newlength{\defbaselineskip}
\numberwithin{equation}{section} 
\newtheorem{theorem}{Theorem}[section]
\newtheorem{corollary}[theorem]{Corollary}

\newtheorem{lemma}[theorem]{Lemma}
\newtheorem{proposition}[theorem]{Proposition}

\newtheorem{remark}{Remark}[section]

 \makeatletter
\newcommand{\thechapterwords}
{ \ifcase \thechapter\or 1\or 2\or 3\or 4\or 5\or
	6\or 7\or 8\or 9\or 10\or 11\fi}
\def\thickhrulefill{\leavevmode \leaders \hrule height 2ex \hfill \kern \z@}
\def\@makechapterhead#1{%
	\vspace*{15\p@}%
	{\parindent \z@ \centering \reset@font
		\thickhrulefill\quad
		\scshape  {\chapnumfont \@chapapp{}}{\chapnumfont \thechapterwords}
		\quad \thickhrulefill
		\par\nobreak
		\vspace*{15\p@}%
		\interlinepenalty\@M
		\hrule
		\vspace*{15\p@}%
		\huge {\bfseries  #1}\par\nobreak
		\par
		\vspace*{15\p@}%
		\hrule
		\vskip 15\p@
}}
\def\@makeschapterhead#1{%
	\vspace*{15\p@}%
	{\parindent \z@ \centering \reset@font
		\thickhrulefill
		\par\nobreak
		\vspace*{15\p@}%
		\interlinepenalty\@M
		\hrule
		\vspace*{15\p@}%
		\Huge \bfseries #1\par\nobreak
		\par
		\vspace*{15\p@}%
		\hrule
		\vskip 30\p@
}}

\DeclareFixedFont{\chapnumfont}{T1}{phv}{b}{n}{20pt}
\DeclareFixedFont{\chapchapfont}{T1}{phv}{b}{n}{16pt}
\DeclareFixedFont{\chaptitfont}{T1}{phv}{b}{n}{24.88pt}
\def\@makechapterhead#1{%
	\vspace*{15\p@}%
	{\parindent \z@ \centering \reset@font
		\thickhrulefill\quad
		\scshape {\chaptitfont\color[rgb]{0.00,0.50,1.00}\@chapapp{}}
		{\chapnumfont \thechapterwords}
		\quad \thickhrulefill
		\par\nobreak
		\vspace*{15\p@}%
		\interlinepenalty\@M
		\hrule
		\vspace*{15\p@}%
		{\Large\bfseries #1}\par\nobreak
		\par
		\vspace*{15\p@}%
		\hrule
		\vskip 30\p@
}}%

\title{Lifting Symplectomorphism Group Actions on Bi-Lagrangian Structures to the Whitney Sum}
\author[1]{Bertuel Tangue Ndawa}
\author[2]{Ferdinand Ngakeu}
\author[3]{Nasser Saipele Nansidi}
\affil[1]{
	University of Ngaoundere, Cameroon.}
\affil[2]{
University of Douala, Cameroona.}
\affil[3]{
University of Maroua, Cameroon.}

	\date{ }

\begin{document}

	\maketitle
	
	\selectlanguage{english}
\begin{center}
	To Professor Michel Nguiffo Boyom.
\end{center}
	\section*{Abstract}
Let $M$ be a manifold endowed with a bi-Lagrangian structure $(\omega, \mathcal{F}_1, \mathcal{F}_2)$. Thus, $\omega$ is a symplectic form, and $(\mathcal{F}_1, \mathcal{F}_2)$ is a pair of transverse Lagrangian foliations on the symplectic manifold $(M, \omega)$. A bi-Lagrangian structure is said to be \textbf{affine} if the associated linear connection is curvature-free.

We prove that, if $M$ is parallelizable, then every bi-Lagrangian structure on $M$ naturally induces two bi-Lagrangian structures on the tangent bundle $TM$ and on the cotangent bundle $T^*M$, and hence on the Whitney sum $W = TM \oplus T^*M$. The first way to lift a bi-Lagrangian structure yields an affine bi-Lagrangian structure. For the second way, we prove that the lifted bi-Lagrangian structure is affine if and only if the initial one is affine.
We also show that, if the bi-Lagrangian structures on $M$ can be lifted to $TM$ or $T^*M$, then the action of the symplectomorphism group on the set of bi-Lagrangian structures defined in \cite{TNB} admits natural lifts to $TM$, $T^*M$, and hence to $W = TM \oplus T^*M$.

	\textbf{Keywords}: Bi-Lagrangian,  Hess Connection, Lift, Symplectic, Symplectomorphism.
	
	\textbf{MSC2010}: 53D05, 53D12.
	
	\textbf{Acknowledgment}:
	

	\section{Introduction}
	Let $(M,\omega)$ be a symplectic manifold. This means that $\omega$ is a symplectic form on $M$; that is, $\omega$ is a closed and nondegenerate $2$-form.	
	A bi-Lagrangian structure on $(M,\omega)$ is a pair
	$(\mathcal{F}_{1},\mathcal{F}_{2})$ of transverse Lagrangian foliations.
	Equivalently, a bi-Lagrangian structure on $M$ is a triple
	$(\omega,\mathcal{F}_{1},\mathcal{F}_{2})$, where
	$(\mathcal{F}_{1},\mathcal{F}_{2})$ is a bi-Lagrangian structure on $(M,\omega)$, see
	\cite{2, 11, 3, 1, 7,TNB}. In both cases,
	$(M,\omega,\mathcal{F}_{1},\mathcal{F}_{2})$ is called a bi-Lagrangian manifold.	
	Such a manifold has a unique torsion-free connection $\nabla$ which parallelizes the symplectic form and preserves both foliations (called the Hess connection), see \cite{7}.
	Hess connections are particular cases of Bott connections, which are linear
	connections preserving the foliations, see \cite{vai, Wei}. Also, Hess connections
	play an important role in geometric quantization, see \cite{7}, and in
	Koszul-Vinberg cohomology, see \cite{GB1}.

	 A bi-Lagrangian structure 
	$(\omega,\mathcal{F}_{1},\mathcal{F}_{2})$ on a manifold $M$ corresponds 
	one-to-one to a para-K\"{a}hler structure $(G,F)$ on $M$. Here, $G$ is a 
	pseudo-Riemannian metric on $M$, and $F$ is a para-complex structure on $M$ 
	compatible with $G$ in the following sense:
	$
	G(F(\cdot),F(\cdot))=-G(\cdot,\cdot).
	$
	The three tensors $\omega$, $G$, and $F$ are related by
	$
	\omega(\cdot,\cdot)=G(F(\cdot),\cdot),
	$
	see \cite{1}. Let note that, the Levi-Civita connection of $G$ 
	coincides with the Hess connection of 
	$(M,\omega,\mathcal{F}_{1},\mathcal{F}_{2})$, see \cite{1}.
	Therefore, bi-Lagrangian manifolds lie at the interface of symplectic geometry, 
	semi-Riemannian geometry, and almost product, or para-complex, geometry. 

In \cite{TNB}, the author introduces the lifting of a dynamics on the set of bi-Lagrangian structures on the cotangent bundle $T^*M$ of $M$, for certain manifolds $M$. We extend this result to the Whitney sum $W = TM \oplus T^*M$. Moreover, we define another lifting of this dynamics on $T^*M$, which also induces a lift on $W$. We remark that lifting the bi-Lagrangian structures on $M$ is sufficient to lift this dynamics. In the first approach, each lifted bi-Lagrangian structure on $TM$, $T^*M$, or $W$ admits local adapted coordinates; that is, it is affine (see Theorem~\ref{c10}). In the second approach, the Hess connection of the lifted bi-Lagrangian structure is defined by that of the original structure. Consequently, the lifted structure is affine if and only if the original structure is affine.
 	
Before explaining our results more precisely and proving them, it is necessary to 
present some definitions, fix some notations, and recall some known results that we 
shall need.
	
	\subsection{Definitions and notations}\label{sub1}
	We assume that all objects considered in this paper are smooth.
	
	Although all objects considered here are smooth, we recall the general definition of
	a foliation of class $C^r$. Let $M$ be an $m$-dimensional manifold. By a
	$p$-dimensional foliation $\mathcal{F}$ of class $C^r$, $0\leq r\leq \infty$, on
	$M$, we mean a decomposition of $M$ into a union of disjoint connected subsets
	$\{\mathcal{F}_x\}_{x\in M}$, called the leaves of the foliation, with the following
	property: every point $y\in M$ has a neighborhood $U$ and a system of local
	coordinates of class $C^r$,
	 $(y^1,\dots,y^m)$,
	such that, for each leaf $\mathcal{F}_x$, the connected components of
	$U\cap \mathcal{F}_x$ are described by the equations
$y^{p+1}=\mathrm{constant},\dots,y^m=\mathrm{constant}
	$.
	
	The expressions $T\mathcal{F}\subset TM$ and
	$\Gamma\left(T\mathcal{F}\right)$, or simply $\Gamma\left(\mathcal{F}\right)$,
	denote the tangent bundle of $\mathcal{F}$ and the space of sections of
	$T\mathcal{F}$, respectively.
	
	Let $\psi : M\longrightarrow N$ be a diffeomorphism. The push-forward $
	\psi_*\mathcal{F}=\{\psi(\mathcal{F}_x)\}_{x\in M}
	$
	of $\mathcal{F}$ by $\psi$ is a foliation on $N$, and
	\begin{equation}
		\Gamma\left(\psi_*\mathcal{F}\right)
		:=
		\{\psi_*X,\; X \in\Gamma\left(\mathcal{F}\right)\}
		=
		\psi_*\Gamma\left(\mathcal{F}\right).
		\label{Bieq2}
	\end{equation}
	
For every $k$-dimensional manifold $M'$, the vector bundle
$M'\times\mathbb{R}^k$ is called the trivial bundle over $M'$. A manifold is said
to be parallelizable if its tangent bundle is isomorphic, as a vector bundle, to
its trivial bundle. We denote by $\mathcal{M}^{\pi}$ the class of parallelizable
manifolds.

Let us recall some standard examples and properties. Every Lie group belongs to
$\mathcal{M}^{\pi}$. If $M'$ is a $k$-dimensional manifold that can be covered by a
single chart, then $M'\in \mathcal{M}^{\pi}$. Moreover, every connected
$1$-dimensional manifold belongs to $\mathcal{M}^{\pi}$. Finally, since $
T(M_1\times M_2)\simeq \pi_1^*TM_1\oplus \pi_2^*TM_2$,
where $\pi_i$ denotes the canonical projection, the product of two manifolds in
$\mathcal{M}^{\pi}$ also belongs to $\mathcal{M}^{\pi}$.

Assume now that $M$ is endowed with a symplectic form $\omega$. Then
$\dim M=2n$ for some integer $n$. Since $\omega$ is non-degenerate, it induces
a vector bundle isomorphism $\omega^\flat:TM\longrightarrow T^*M,\quad
(x,X_x)\longmapsto \bigl(x,\omega_x(X_x,\cdot)\bigr),$
whose inverse will be denoted by
$\omega^\sharp:T^*M\longrightarrow TM$.
In particular, $TM$ and $T^*M$ are diffeomorphic as total spaces. Hence, if $M$ is
parallelizable, then, as vector bundles over $M$,
$
M\times \mathbb{R}^{2n} \simeq TM \simeq T^*M.
$

A foliation $\mathcal{F}$ of $M$ is said to be Lagrangian if
$
\Gamma(\mathcal{F})^{\perp_\omega}=\Gamma(\mathcal{F})$
where
\[
\Gamma(\mathcal{F})^{\perp_\omega}
=
\left\{
Y\in \mathfrak{X}(M):\; \omega(X,Y)=0,\; \forall X\in
\Gamma(\mathcal{F})
\right\}.
\]
Thus, for every vector field $Y\in\mathfrak{X}(M)$, one has
\[
\omega(X,Y)=0 \ \text{for all } X\in \Gamma(\mathcal{F})
\quad \Longleftrightarrow \quad
Y\in\Gamma(\mathcal{F}).
\]

A bi-Lagrangian structure on $M$ consists of a symplectic form $\omega$ together
with a pair $(\mathcal{F}_{1},\mathcal{F}_{2})$ of transverse Lagrangian
foliations. Consequently,
\[
TM=T\mathcal{F}_{1}\oplus T\mathcal{F}_{2}.
\]
We denote by $\mathcal{B}_l(M)$ the set of bi-Lagrangian structures on $M$.

Let $(\omega,\mathcal{F}_{1},\mathcal{F}_{2})$ be a bi-Lagrangian structure on a
$2n$-dimensional manifold $M$. Every point of $M$ has an open neighborhood $U$
which is the domain of a chart with local coordinates
$(p^1,\dots,p^{n},q^1,\dots,q^{n})$ such that
\begin{equation*}
\begin{cases}
\Gamma(\mathcal{F}_1)_{\mid U}
=
\left\langle
\dfrac{\partial}{\partial p^1},\dots,
\dfrac{\partial}{\partial p^n}
\right\rangle,
\vspace{0.25cm}\\
\Gamma(\mathcal{F}_2)_{\mid U}
=
\left\langle
\dfrac{\partial}{\partial q^{1}},\dots,
\dfrac{\partial}{\partial q^{n}}
\right\rangle.
\end{cases}
\end{equation*}
Such a chart, and such local coordinates, are said to be adapted to the pair of
foliations $(\mathcal{F}_{1},\mathcal{F}_{2})$.
Moreover, if in these coordinates the symplectic form is given by
\[
\omega=\sum_{i=1}^{n}dq^i\wedge dp^i,
\]
then the chart, and the corresponding local coordinates, are said to be adapted
to the bi-Lagrangian structure
$(\omega,\mathcal{F}_{1},\mathcal{F}_{2})$.
	
We assume that $M$ is parallelizable. Let
$
\pi:T^*M \simeq M\times \mathbb{R}^{2n}\longrightarrow M
$
be the natural projection, and denote by
$(\xi_1,\dots,\xi_{2n})$
the standard coordinates on $\mathbb{R}^{2n}$. For every vector field
$X\in \mathfrak{X}(M)$, we identify $X$ with its natural horizontal lift to
$M\times\mathbb{R}^{2n}$, defined by
$
\widetilde{X}_{(x,\xi)}=(X_x,0)\in T_xM\oplus T_\xi\mathbb{R}^{2n}$.
Equivalently, $\widetilde{X}$ is $\pi$-related to $X$ and has no component in
the $\mathbb{R}^{2n}$-direction.

We define two distributions on $T^*M$ by
\begin{equation*}
\begin{cases}
\Gamma\left(\mathcal{F}_1^{\pi}\right)
=
\Gamma\left(\mathcal{F}_1\right)
+
\left\langle
\dfrac{\partial}{\partial \xi_{n+1}},\dots,
\dfrac{\partial}{\partial \xi_{2n}}
\right\rangle
\subset
\Gamma\left(T^*M\right),
\vspace{0.25cm}\\
\Gamma\left(\mathcal{F}_2^{\pi}\right)
=
\Gamma\left(\mathcal{F}_2\right)
+
\left\langle
\dfrac{\partial}{\partial \xi_1},\dots,
\dfrac{\partial}{\partial \xi_n}
\right\rangle
\subset
\Gamma\left(T^*M\right).
\end{cases}
\end{equation*}
Here the brackets denote the $C^\infty(T^*M)$-module generated by
the indicated vector fields.

Let $(M,\omega,\mathcal{F}_1,\mathcal{F}_2)$ be a bi-Lagrangian manifold of
dimension $2n$. We write
\[
\operatorname{Lift}(M,\omega,\mathcal{F}_1,\mathcal{F}_2)
=
\left(T^*M,
\omega^\pi,
\mathcal{F}_1^{\pi},
\mathcal{F}_2^{\pi}
\right),
\]
where $\omega^\pi$ denotes the lifted symplectic form on
$T^*M$.

Inductively, whenever the previous lift is defined, we set
\[
\operatorname{Lift}^{k+1}(M,\omega,\mathcal{F}_1,\mathcal{F}_2)
=
\operatorname{Lift}
\left(
\operatorname{Lift}^{k}(M,\omega,\mathcal{F}_1,\mathcal{F}_2)
\right),
\quad k\in\mathbb{N}.
\]
We show that
\[
\operatorname{Lift}^{k}(M,\omega,\mathcal{F}_1,\mathcal{F}_2)
\]
exists for every $k\in\mathbb{N}$, see Corollary~\ref{Bicor1}. In this sense, every
bi-Lagrangian structure considered here is infinitely liftable.

Note also that the vector space $\mathbb{R}^{m}$ is naturally diffeomorphic to its
dual vector space $(\mathbb{R}^{m})^*$. Therefore, depending on the context, we
shall sometimes identify $\mathbb{R}^{m}$ with $(\mathbb{R}^{m})^*$.

A symplectomorphism between two symplectic manifolds $(M_1,\omega_1)$ and
$(M_2,\omega_2)$ is a diffeomorphism
$\psi:M_1\longrightarrow M_2$
such that
$
\psi^*\omega_2=\omega_1.
$
The set of all symplectomorphisms from $(M,\omega)$ to itself is denoted by
$
\operatorname{Symp}(M,\omega),
$
and it is a group under composition.

Let $\operatorname{Conn}(M)$ denote the set of linear connections on $M$. For
$\nabla\in \operatorname{Conn}(M)$, the torsion tensor $T_\nabla$ and the curvature
tensor $R_\nabla$ are defined respectively by
\[
T_\nabla(X,Y)
=
\nabla_XY-\nabla_YX-[X,Y],
\quad X,Y\in\mathfrak{X}(M),
\]
and
\[
R_\nabla(X,Y)Z
=
\nabla_X\nabla_Y Z
-
\nabla_Y\nabla_X Z
-
\nabla_{[X,Y]}Z,
\quad X,Y,Z\in\mathfrak{X}(M).
\]
Here $[X,Y]$ denotes the Lie bracket of the vector fields $X$ and $Y$.

 We say that a bi-Lagrangian structure is affine if its
Hess connection $\nabla$ is curvature-free; equivalently, if $\nabla$ is flat.
We denote by $\mathcal{B}_{lp}(M)$ the set of affine bi-Lagrangian structures
on $M$. The set $\mathcal{B}_{lp}(M)$ is characterized in
Theorem~\ref{c10}.

We say that a connection $\nabla$
\begin{enumerate}
	\item[-] parallelizes $\omega$ if $\nabla\omega=0$; that is,
	\begin{equation} \label{Bieq3}
	\omega(\nabla_{X}Y,Z)+\omega(Y,\nabla_{X}Z)
	=
	X\omega(Y,Z),
	\quad X,Y,Z\in\mathfrak{X}(M);
	\end{equation}
	
	\item[-] preserves $\mathcal{F}$ if
	$\nabla\Gamma\left(\mathcal{F}\right)\subseteq
	\Gamma\left(\mathcal{F}\right)$; more precisely,
	\begin{equation}  \label{Bieq4}
	\nabla_XY\in\Gamma\left(\mathcal{F}\right),
	\quad (X,Y)\in \mathfrak{X}(M)\times\Gamma\left(\mathcal{F}\right).
	\end{equation}
\end{enumerate}

Einstein summation convention: an index repeated as a subscript and a
superscript in a product represents summation over the range of the index. For
example,
$$
\lambda^i\xi_i=\sum_{i=1}^n \lambda^i\xi_i.
$$
In the same way,
$$
X^i\partial_i=X^i\frac{\partial}{\partial x^i}
=
\sum_{i=1}^n X^i\frac{\partial}{\partial x^i}.
$$

	Let $k\in\mathbb{N}$.  Instead of $\{1,2,\dots,k\}$ we will simply write $[k]$. The expression
	$I_k$ stands for the $k\times k$ identity matrix in $\mathbb{R}$.




	\subsection{Technical tools}
In this part, we present some results that will be needed in the sequel.

\subsubsection{Symplectic manifolds}

Symplectic manifolds provide natural geometric frameworks for the study of
Hamiltonian dynamics. They form a category whose objects are symplectic
manifolds and whose morphisms are symplectomorphisms. Among the fundamental
examples of symplectic manifolds, the cotangent bundle of any smooth manifold
carries a canonical symplectic form. Moreover, every diffeomorphism between two
manifolds induces a symplectomorphism between their cotangent bundles, endowed
with their respective canonical symplectic forms. This subsection is devoted to
precise formulations of these facts. 

Let $M$ be an $m$-dimensional manifold and let
$\pi:T^*M\longrightarrow M$ be the natural projection. The tautological
$1$-form, also called the Liouville $1$-form, is the $1$-form $\theta$ on
$T^*M$ defined by
\[
\theta_{(x,\alpha_x)}(v)
=
\alpha_x\left(T_{(x,\alpha_x)}\pi(v)\right),
\quad
(x,\alpha_x)\in T^*M,\; v\in T_{(x,\alpha_x)}T^*M.
\]
The exterior differential $d\theta$ is called the canonical symplectic form,
or Liouville $2$-form, on the cotangent bundle $T^*M$.
For any coordinate chart 
$
(T^*U,x^1,\dots,x^m,\xi_1,\dots,\xi_m),
$
we have
$
\theta=\xi_i\,dx^i$,
and therefore
$
d\theta=d\xi_i\wedge dx^i.
$

\begin{proposition}\label{liouville}
	Let $M$ be a smooth manifold. Then the cotangent bundle $T^*M$, endowed with
	the canonical symplectic form $d\theta$, is a symplectic manifold.
\end{proposition}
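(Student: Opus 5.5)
The plan is to check the two defining properties of a symplectic form for $d\theta$: that it is closed and that it is nondegenerate. Both are local, so we will work in a cotangent chart. Smoothness of $d\theta$ will follow once $\theta$ is shown to be smooth.

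\emph{Step 1: $\theta$ is a well-defined smooth $1$-form.} Pointwise, $\theta_{(x,\alpha_x)}=\alpha_x\circ T_{(x,\alpha_x)}\pi$ is the composition of a linear map with a linear form, so it is linear on $T_{(x,\alpha_x)}T^*M$. Take a chart $(U,x^1,\dots,x^m)$ of $M$ and the induced chart $(T^*U,x^1,\dots,x^m,\xi_1,\dots,\xi_m)$, where $\alpha_x=\xi_i\,dx^i|_x$. In these coordinates $T\pi(\partial/\partial x^i)=\partial/\partial x^i$ and $T\pi(\partial/\partial \xi_i)=0$. Evaluating $\theta$ on the coordinate frame therefore gives $\theta=\xi_i\,dx^i$ on $T^*U$, as already recorded above. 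This expression has smooth coefficients, and since $\theta$ was defined intrinsically it agrees on overlaps of charts without any further compatibility check.

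\emph{Step 2: closedness.} $d\theta$ is exact, so $d(d\theta)=d^2\theta=0$. Locally this is visible directly from $d\theta=d\xi_i\wedge dx^i$, whose coefficients are constant.

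\emph{Step 3: nondegeneracy.} At each point of $T^*U$, the matrix of $d\theta=\sum_i d\xi_i\wedge dx^i$ in the basis $(\partial_{x^1},\dots,\partial_{x^m},\partial_{\xi_1},\dots,\partial_{\xi_m})$ is
\[
\begin{pmatrix} 0 & -I_m\\ I_m & 0\end{pmatrix},
\]
whose determinant is $1\neq 0$. Equivalently, suppose $v=a^i\partial_{x^i}+b_i\partial_{\xi_i}$ satisfies $d\theta(v,\cdot)=0$. Pairing with $\partial_{\xi_j}$ forces $a^j=0$, and pairing with $\partial_{x^j}$ forces $b_j=0$. Since the cotangent charts cover $T^*M$, $d\theta$ is nondegenerate everywhere. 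Together with Step 2, this shows that $(T^*M,d\theta)$ is a symplectic manifold of dimension $2m$.

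The only step needing real care is Step 1, namely verifying the coordinate expression $\theta=\xi_i\,dx^i$ from the intrinsic definition, in particular that $T\pi$ kills the vertical vectors. Once that expression is in hand, Steps 2 and 3 are routine linear algebra.
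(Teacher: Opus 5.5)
Your proof is correct: the intrinsic definition gives the local formula $\theta=\xi_i\,dx^i$, so $d\theta$ is smooth and exact (hence closed), and its constant matrix in each cotangent chart, with determinant $1$, gives nondegeneracy. The paper states this proposition as a standard fact without proof and records only the same local expressions $\theta=\xi_i\,dx^i$ and $d\theta=d\xi_i\wedge dx^i$, so your argument is the standard one those formulas encode.
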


\begin{proposition}\label{lifting of symplecto}
	Let $M_1$ and $M_2$ be smooth manifolds, and let
	$\varphi:M_1\longrightarrow M_2$ be a diffeomorphism. The lift
	$
	\widehat{\varphi}:T^*M_1\longrightarrow T^*M_2$ of $\varphi$ is the map
	$
	\widehat{\varphi}(x,\alpha_x)
	=
	\left(
	\varphi(x),
	\alpha_x\circ T_{\varphi(x)}\varphi^{-1}
	\right).
	$
	Equivalently,
$
	\widehat{\varphi}=T^*(\varphi^{-1}).
	$
	Then $\widehat{\varphi}$ is a symplectomorphism from
	$(T^*M_1,d\theta_1)$ to $(T^*M_2,d\theta_2)$, where $d\theta_1$ and
	$d\theta_2$ are the canonical symplectic forms on $T^*M_1$ and $T^*M_2$,
	respectively.
\end{proposition}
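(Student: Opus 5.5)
The plan is to prove first that $\widehat{\varphi}$ preserves the tautological forms, $\widehat{\varphi}^*\theta_2=\theta_1$. Taking exterior derivatives and using $d\circ\widehat{\varphi}^*=\widehat{\varphi}^*\circ d$ then gives $\widehat{\varphi}^*d\theta_2=d\theta_1$. Since $\widehat{\varphi}$ is a diffeomorphism, this is exactly the symplectomorphism property.

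First I will check that $\widehat{\varphi}$ is a smooth bijection covering $\varphi$, that is, $\pi_2\circ\widehat{\varphi}=\varphi\circ\pi_1$, where $\pi_i:T^*M_i\to M_i$ are the projections. The fibre map $\alpha_x\mapsto \alpha_x\circ T_{\varphi(x)}\varphi^{-1}$ is the transpose of the linear isomorphism $T_{\varphi(x)}\varphi^{-1}:T_{\varphi(x)}M_2\to T_xM_1$, so it is a linear isomorphism $T^*_xM_1\to T^*_{\varphi(x)}M_2$. Its inverse is $\widehat{\varphi^{-1}}$, built the same way from $\varphi^{-1}$. Smoothness is checked in charts: in cotangent coordinates the fibre part is $\xi\mapsto \xi\cdot J_{\varphi^{-1}}(\varphi(x))$, which is smooth in $(x,\xi)$. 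So $\widehat{\varphi}$ is a diffeomorphism.

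Next comes the key computation. Fix $(x,\alpha_x)\in T^*M_1$ and $v\in T_{(x,\alpha_x)}T^*M_1$. By the definition of $\theta_2$,
\[
(\widehat{\varphi}^*\theta_2)_{(x,\alpha_x)}(v)=\theta_2\bigl(T\widehat{\varphi}(v)\bigr)=\bigl(\alpha_x\circ T_{\varphi(x)}\varphi^{-1}\bigr)\bigl(T\pi_2\,T\widehat{\varphi}(v)\bigr).
\]
The chain rule applied to $\pi_2\circ\widehat{\varphi}=\varphi\circ\pi_1$ gives $T\pi_2\,T\widehat{\varphi}(v)=T_x\varphi\,(T\pi_1 v)$. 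Since $T_{\varphi(x)}\varphi^{-1}\circ T_x\varphi=\mathrm{id}$, the expression reduces to $\alpha_x(T\pi_1 v)=(\theta_1)_{(x,\alpha_x)}(v)$. Hence $\widehat{\varphi}^*\theta_2=\theta_1$, and naturality of $d$ finishes the argument.

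No step here is a real obstacle. The only place needing care is the bookkeeping of base points in the chain-rule identity $\pi_2\circ\widehat{\varphi}=\varphi\circ\pi_1$, since that is exactly what makes $T\varphi$ and $T\varphi^{-1}$ cancel. If one prefers, the same result can be checked in local coordinates: write $\widehat{\varphi}$ as $(x,\xi)\mapsto(\varphi(x),\xi\,D\varphi^{-1})$ and verify directly that $\eta_j\,dy^j$ pulls back to $\xi_i\,dx^i$.
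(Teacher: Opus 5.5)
Your argument is correct and complete. You show that $\widehat{\varphi}$ is a diffeomorphism covering $\varphi$, with inverse $\widehat{\varphi^{-1}}$. You then use $\pi_2\circ\widehat{\varphi}=\varphi\circ\pi_1$ to get $\widehat{\varphi}^*\theta_2=\theta_1$, and conclude by naturality of $d$.

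The paper gives no proof of this proposition; it is quoted as a standard tool. Your route is the standard textbook argument.

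Your proof actually gives the stronger fact that $\widehat{\varphi}$ preserves the Liouville $1$-forms. The same covering identity also gives $\widehat{\varphi}^*\pi^*\omega=\pi^*\varphi^*\omega$. The paper relies on this without saying so when it later claims that lifts of elements of $\operatorname{Symp}(M,\omega)$ preserve $\widetilde{\omega}=\pi^*\omega+d\theta$. The proposition on its own only covers the $d\theta$ part.
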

	\subsubsection{Bi-Lagrangian  manifolds}
These manifolds have been intensively studied in recent years, see, for instance,
\cite{2, 11, 3, GB1,   7}. Among the main motivations for their
study are their connections with geometric quantization and Koszul--Vinberg
cohomology. In this section, we briefly recall some results concerning Hess
connections and affine bi-Lagrangian structures.

The Hess connection, also called the bi-Lagrangian connection, associated with a
bi-Lagrangian structure is characterized by the following theorem.

\begin{theorem}\cite[Theor. 1]{7}\label{b20}
	Let $(M,\omega,\mathcal{F}_1,\mathcal{F}_2)$ be a bi-Lagrangian manifold. Then
	there exists a unique torsion-free connection $\nabla$ on $M$ such that $
	\nabla\omega=0$,	
	and such that $\nabla$ preserves both foliations $\mathcal{F}_1$ and
	$\mathcal{F}_2$.
\end{theorem}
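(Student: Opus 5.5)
We prove Theorem~\ref{b20} by showing uniqueness through a Koszul-type formula and then constructing $\nabla$ explicitly from the splitting $TM=T\mathcal{F}_1\oplus T\mathcal{F}_2$.

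For \emph{uniqueness}, suppose $\nabla$ is torsion-free, parallelizes $\omega$ and preserves both foliations. Take $X\in\Gamma(\mathcal{F}_1)$ and $Y\in\Gamma(\mathcal{F}_2)$. Torsion-freeness gives $\nabla_XY=\nabla_YX+[X,Y]$. Since $\nabla_XY\in\Gamma(\mathcal{F}_2)$ and $\nabla_YX\in\Gamma(\mathcal{F}_1)$, we can write $[X,Y]=[X,Y]_1+[X,Y]_2$ according to $TM=T\mathcal{F}_1\oplus T\mathcal{F}_2$. This forces the mixed derivatives
\[
\nabla_XY=[X,Y]_2,\qquad \nabla_YX=-[X,Y]_1 .
\]
Next take $X,X'\in\Gamma(\mathcal{F}_1)$ and $Y\in\Gamma(\mathcal{F}_2)$. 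Equation~\eqref{Bieq3} gives
\[
\omega(\nabla_XX',Y)=X\omega(X',Y)-\omega(X',\nabla_XY)=X\omega(X',Y)-\omega(X',[X,Y]_2).
\]
Because $\mathcal{F}_1$ is Lagrangian and transverse to $\mathcal{F}_2$, the pairing $\omega:T\mathcal{F}_1\times T\mathcal{F}_2\to\mathbb{R}$ is nondegenerate. Hence $\nabla_XX'\in\Gamma(\mathcal{F}_1)$ is determined by its pairing against all $Y$. The symmetric argument determines $\nabla_YY'$ for $Y,Y'\in\Gamma(\mathcal{F}_2)$. By $C^\infty$-bilinearity in the splitting, $\nabla$ is determined completely.

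For \emph{existence}, define $\nabla$ by these formulas: mixed terms via the bracket projections, and $\nabla_XX'$ as the unique element of $\Gamma(\mathcal{F}_1)$ with $\omega(\nabla_XX',Y)=X\omega(X',Y)-\omega(X',[X,Y]_2)$ for all $Y\in\Gamma(\mathcal{F}_2)$, and similarly for $\nabla_YY'$. Then extend additively. Several routine checks follow.
\begin{enumerate}
\item[(a)] The right-hand sides are tensorial in $Y$ (so the definition makes sense) and give a connection. For instance, replacing $Y$ by $fY$ produces $X(f)\omega(X',Y)-\omega(X',X(f)Y)_{\text{from }[X,fY]}=0$ extra terms, and the Leibniz rule in $X'$ holds.
\item[(b)] Both foliations are preserved by construction.
\item[(c)] $\nabla\omega=0$. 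On pairs from the same leaf space both sides vanish, using that $\omega$ vanishes on $T\mathcal{F}_i$ and that $\nabla$ preserves $\mathcal{F}_i$. On mixed pairs the identity is exactly the defining formula, together with the mixed formulas when differentiating in the transverse direction.
\end{enumerate}

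The main obstacle is torsion-freeness on pairs $X,X'\in\Gamma(\mathcal{F}_1)$, namely $\nabla_XX'-\nabla_{X'}X=[X,X']$. The mixed case is immediate, since $\nabla_XY-\nabla_YX=[X,Y]_2+[X,Y]_1$. For the same-leaf case, first note that $[X,X']\in\Gamma(\mathcal{F}_1)$ by involutivity, which is where integrability of the foliations enters. Then pair the difference with an arbitrary $Y$ and use $d\omega=0$ evaluated on $(X,X',Y)$:
\[
X\omega(X',Y)-X'\omega(X,Y)+Y\omega(X,X')-\omega([X,X'],Y)+\omega([X,Y],X')-\omega([X',Y],X)=0,
\]
with $\omega(X,X')=0$ since $\mathcal{F}_1$ is Lagrangian. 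Only the $\mathcal{F}_2$-components of $[X,Y]$ and $[X',Y]$ survive against $X'$ and $X$. Comparing with the defining formula then gives $\omega(\nabla_XX'-\nabla_{X'}X-[X,X'],Y)=0$ for all $Y$, and nondegeneracy of the pairing concludes. The computation for $\mathcal{F}_2$ is symmetric.
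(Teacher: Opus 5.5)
Your proof is correct. The paper does not prove this theorem itself; it quotes Hess. Your uniqueness-then-construction argument yields exactly the explicit formula the paper records in Proposition~\ref{c13}: the mixed terms are $\nabla_{X_1}Y_2=[X_1,Y_2]_2$ and $\nabla_{X_2}Y_1=[X_2,Y_1]_1$, and your same-leaf term $\nabla_XX'$ equals $D(X,X')$, because $(L_X i_{X'}\omega)(Y)=X\omega(X',Y)-\omega(X',[X,Y])$ and $\omega(X',[X,Y]_1)=0$ by isotropy of $\mathcal{F}_1$.
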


The Hess connection can be described explicitly as follows, see \cite[p.65]{7},
\cite[p. 14]{2}, \cite[p. 360]{11}, and \cite[p. 65]{3}.

\begin{proposition}\label{c13}
	Let $(M,\omega,\mathcal{F}_1,\mathcal{F}_2)$ be a bi-Lagrangian manifold. Let
	$\nabla$ be its Hess connection. If
	$
	X=X_1+X_2$ and 
	$Y=Y_1+Y_2$
	where $X_i,Y_i\in \Gamma(T\mathcal{F}_i)$ for $i=1,2$, then
	\begin{equation}
	\nabla_XY
	=
	\left(
	D(X_1,Y_1)+[X_2,Y_1]_1,
	\,
	D(X_2,Y_2)+[X_1,Y_2]_2
	\right),
	\label{17c}
	\end{equation}
	where $[\,\cdot,\cdot\,]_i$ denotes the $\mathcal{F}_i$-component of the Lie
	bracket with respect to the decomposition
	$
	TM=T\mathcal{F}_1\oplus T\mathcal{F}_2$.
	Here $D$ is defined by
	\begin{equation*}
	i_{D(X,Y)}\omega=L_X(i_Y\omega),
	\label{18c}
	\end{equation*}
	for vector fields $X,Y$ tangent to the same foliation.
\end{proposition}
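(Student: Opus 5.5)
The statement to prove is Proposition~\ref{c13}, the explicit formula \eqref{17c} for the Hess connection. By the uniqueness part of Theorem~\ref{b20}, it suffices to show that the operator $\nabla$ defined by \eqref{17c} has four properties: it is a linear connection, it preserves $\mathcal{F}_1$ and $\mathcal{F}_2$, it parallelizes $\omega$, and it is torsion-free.

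First, $D$ is well defined. For $X,Y\in\Gamma(\mathcal{F}_i)$, the $1$-form $L_X(i_Y\omega)$ vanishes on $\Gamma(\mathcal{F}_i)$. Indeed, for $Z\in\Gamma(\mathcal{F}_i)$,
\[
(L_X i_Y\omega)(Z)=X\omega(Y,Z)-\omega(Y,[X,Z])=0,
\]
since $\omega(Y,Z)=0$ and $[X,Z]\in\Gamma(\mathcal{F}_i)$ by involutivity. Because $\mathcal{F}_i$ is Lagrangian, $\omega^\sharp$ maps the annihilator of $T\mathcal{F}_i$ onto $T\mathcal{F}_i$, so $D(X,Y)\in\Gamma(\mathcal{F}_i)$. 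Preservation of the foliations is then immediate from the shape of \eqref{17c}.

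Next, connection axioms. Tensoriality in $X$ holds for $D$ because $L_{fX}\alpha=fL_X\alpha+df\wedge i_X\alpha$. Applied to $\alpha=i_Y\omega$, the extra term $\omega(Y,X)\,df$ vanishes for $X,Y$ in the same Lagrangian foliation. The bracket terms are also tensorial in $X$: $[fX_2,Y_1]_1=f[X_2,Y_1]_1-(Y_1f)(X_2)_1$, and $(X_2)_1=0$. The Leibniz rule in $Y$ follows from $[X,fY]=f[X,Y]+(Xf)Y$ and $L_X(f i_Y\omega)=fL_Xi_Y\omega+(Xf)i_Y\omega$.

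Torsion-freeness follows by splitting the torsion into components. Within one foliation, $D(X_1,Y_1)-D(Y_1,X_1)$ should equal $[X_1,Y_1]$. By Cartan's formula and $d\omega=0$,
\[
L_{X}i_Y\omega-L_Yi_X\omega=i_{[X,Y]}\omega+d(\omega(X,Y))\pm\cdots,
\]
and the extra terms vanish because $\omega(X,Y)=0$ for $X,Y$ in the same leaf. For mixed terms, $\nabla_{X_2}Y_1-\nabla_{Y_1}X_2=[X_2,Y_1]_1+[X_2,Y_1]_2=[X_2,Y_1]$.

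Finally, $\nabla\omega=0$. By tensoriality, it suffices to check \eqref{Bieq3} with $Y$ and $Z$ in complementary foliations, since both sides vanish when $Y,Z$ lie in the same Lagrangian foliation. Take $Y=Y_1$, $Z=Z_2$, and treat the cases $X=X_1$ and $X=X_2$ separately. For $X=X_1$, we have $\omega(D(X_1,Y_1),Z_2)=(L_{X_1}i_{Y_1}\omega)(Z_2)=X_1\omega(Y_1,Z_2)-\omega(Y_1,[X_1,Z_2])$, and $\omega(Y_1,\nabla_{X_1}Z_2)=\omega(Y_1,[X_1,Z_2]_2)=\omega(Y_1,[X_1,Z_2])$. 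These two terms cancel and leave $X_1\omega(Y_1,Z_2)$. The case $X=X_2$ is symmetric. The main obstacle is the bookkeeping in this last step: keeping track of which components survive under $\omega$, using Lagrangianity at each stage and the sign conventions in $i_{D(X,Y)}\omega$. The Cartan-formula step for the torsion needs similar care with signs.
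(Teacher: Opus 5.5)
\textbf{Verdict: correct, and a different route from the paper, which gives no proof at all.} The paper does not prove Proposition~\ref{c13}; it quotes the formula \eqref{17c} from Hess and the other sources listed just before the statement.

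\textbf{Comparison with the paper.} You define $\nabla$ by \eqref{17c}, check four things, and then invoke the uniqueness part of Theorem~\ref{b20}. The four checks are that $\nabla$ is a linear connection, that it preserves both foliations, that it is torsion-free, and that it parallelizes $\omega$. This is the standard verification, and it makes the paper self-contained where it currently relies on a citation. It also proves the existence half of Theorem~\ref{b20}, so only uniqueness is borrowed.

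\textbf{The individual steps all hold:}
\begin{itemize}
\item $L_Xi_Y\omega$ annihilates $T\mathcal{F}_i$, by the Lagrangian property and involutivity. Hence $D(X,Y)\in\Gamma(\mathcal{F}_i)$.
\item Tensoriality in $X$ holds because $\omega(Y,X)=0$ and $(X_2)_1=0$.
\item The Leibniz rule in $Y$ holds because the $(X_1f)Y_1$ and $(X_2f)Y_1$ contributions add up to $(Xf)Y_1$, and likewise for $Y_2$.
\item The mixed torsion terms combine to $[X_2,Y_1]_1+[X_2,Y_1]_2=[X_2,Y_1]$.
\item In the case $X=X_2$ of the $\nabla\omega$ check, which you called ``symmetric'', one has $\omega(Y_1,D(X_2,Z_2))=X_2\omega(Y_1,Z_2)+\omega(Z_2,[X_2,Y_1])$. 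The last term cancels against $\omega([X_2,Y_1]_1,Z_2)=-\omega(Z_2,[X_2,Y_1])$.
\end{itemize}

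\textbf{Two small corrections.} First, the torsion identity has no hidden terms, and the sign is opposite to what you wrote. Use $[L_X,i_Y]=i_{[X,Y]}$, then $L_X\omega=d\,i_X\omega$ (valid since $d\omega=0$), then Cartan's formula on the $1$-form $i_X\omega$. This gives exactly
\[
L_Xi_Y\omega-L_Yi_X\omega=i_{[X,Y]}\omega-d\bigl(\omega(X,Y)\bigr).
\]
Since $\omega(X,Y)\equiv 0$ for $X,Y$ tangent to the same Lagrangian foliation, $D(X,Y)-D(Y,X)=[X,Y]$ on the nose, and the ``$\pm\cdots$'' can be dropped. Second, in the tensoriality step, $df\wedge i_X\alpha$ should be written $(i_X\alpha)\,df$, since $i_X\alpha$ is a function.
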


The following result characterizes affine bi-Lagrangian structures.

\begin{theorem}\cite[Theor. 2]{7}\label{c10}
	Let $(\omega,\mathcal{F}_1,\mathcal{F}_2)$ be a bi-Lagrangian structure on a
	$2n$-dimensional manifold $M$, and let $\nabla$ be its Hess connection. Then the
	following assertions are equivalent:
	\begin{description}
		\item[\textnormal{a)}] The connection $\nabla$ is flat.
		\item[\textnormal{b)}] Around each point of $M$, there exists a coordinate chart
		adapted to $(\omega,\mathcal{F}_1,\mathcal{F}_2)$ in which the coefficients of
		$\nabla$ vanish.
	\end{description}
\end{theorem}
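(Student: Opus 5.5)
The statement is Theorem~\ref{c10}: the Hess connection is flat if and only if there are adapted coordinates in which its Christoffel symbols vanish. The direction b)$\Rightarrow$a) is immediate. If around each point there is a chart in which all $\Gamma^k_{ij}=0$, then the curvature components
\[
R^l_{ijk}=\partial_i\Gamma^l_{jk}-\partial_j\Gamma^l_{ik}+\Gamma^l_{im}\Gamma^m_{jk}-\Gamma^l_{jm}\Gamma^m_{ik}
\]
vanish on that chart. Since such charts cover $M$, we get $R_\nabla=0$.

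For a)$\Rightarrow$b), I would use the standard fact that a flat, torsion-free connection admits local affine coordinates. Flatness gives, near any point $x_0$, a local frame of $\nabla$-parallel vector fields $E_1,\dots,E_{2n}$, obtained by parallel transport along radial paths; this is independent of path because $R_\nabla=0$. The frame should be adapted to the structure, and I would choose it so at $x_0$. Pick a basis $e_1,\dots,e_n$ of $T_{x_0}\mathcal F_1$ and the $\omega$-dual basis $f_1,\dots,f_n$ of $T_{x_0}\mathcal F_2$, so that $\omega(f_i,e_j)=\delta_{ij}$. This dual basis exists because $\omega$ pairs the two Lagrangian subspaces nondegenerately. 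Parallel transport then gives $P_1,\dots,P_n,Q_1,\dots,Q_n$.

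Next I would check that these fields respect the structure. Since $\nabla$ preserves $T\mathcal F_1$ and $T\mathcal F_2$, parallel transport maps $T\mathcal F_i$ to itself, so $P_i\in\Gamma(\mathcal F_1)$ and $Q_i\in\Gamma(\mathcal F_2)$. Since $\nabla\omega=0$, the functions $\omega(Q_i,P_j)$ are constant and equal to $\delta_{ij}$, and $\omega$ vanishes on pairs within the same family by the Lagrangian condition. Torsion-freeness gives $[E_a,E_b]=\nabla_{E_a}E_b-\nabla_{E_b}E_a=0$, so the fields commute. Hence there are local coordinates $(p^1,\dots,p^n,q^1,\dots,q^n)$ with $\partial/\partial p^i=P_i$ and $\partial/\partial q^i=Q_i$. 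These coordinates are adapted to the foliations because the $P_i$ span $T\mathcal F_1$ and the $Q_i$ span $T\mathcal F_2$.

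In these coordinates $\omega(\partial_{q^i},\partial_{p^j})=\delta_{ij}$ and the other pairings vanish, so $\omega=\sum_i dq^i\wedge dp^i$, up to the sign convention fixed in the paper. If needed, I would flip the sign by replacing $q$ with $-q$. The coordinate fields are parallel, so all coefficients of $\nabla$ vanish.

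The main obstacle is the existence of the parallel frame, that is, the path-independence of parallel transport on a simply connected neighborhood under $R_\nabla=0$. I would establish it by the Frobenius theorem applied to the horizontal distribution on the frame bundle, whose integrability is exactly curvature zero. I also need to check carefully that preservation of the foliations by $\nabla$ really forces parallel transport to preserve each $T\mathcal F_i$. This follows from uniqueness of solutions of the parallel-transport ODE restricted to the subbundle.
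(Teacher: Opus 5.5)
The paper does not prove this statement. It is quoted from Hess (Theorem~2 of the cited reference) and used as a black box, so there is no internal argument to compare with.

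Your proof is correct, and it is the standard one. You take a flat, torsion-free connection that preserves $T\mathcal{F}_1$ and $T\mathcal{F}_2$ and parallelizes $\omega$. From a basis at $x_0$ with $\omega(f_i,e_j)=\delta_{ij}$, you obtain commuting parallel fields $P_i\in\Gamma(\mathcal{F}_1)$ and $Q_i\in\Gamma(\mathcal{F}_2)$ with $\omega(Q_i,P_j)=\delta_{ij}$. These give adapted Darboux coordinates in which the Christoffel symbols vanish. The two points you flag (path-independence under $R_\nabla=0$, and parallel transport preserving each $T\mathcal{F}_i$) are handled correctly by the arguments you indicate.

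Two small remarks:
\begin{enumerate}
\item No sign flip is needed. Since $(dq^i\wedge dp^i)(\partial_{q^k},\partial_{p^l})=\delta_{ik}\delta_{il}$, your normalization $\omega(Q_k,P_l)=\delta_{kl}$ gives exactly $\omega=\sum_i dq^i\wedge dp^i$.
\item By the uniqueness in Theorem~\ref{b20}, the Christoffel symbols vanish in \emph{any} chart adapted to $(\omega,\mathcal{F}_1,\mathcal{F}_2)$. The connection defined by $\nabla_{\partial_a}\partial_b=0$ is torsion-free, parallelizes the constant-coefficient form $\omega$, and preserves both coordinate foliations, so it is the Hess connection. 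Hence b) is equivalent to the mere existence of adapted Darboux charts, and flatness is used in your argument precisely to construct them. This is also how the paper applies the theorem in the proof of Theorem~\ref{Bitheo3}.
\end{enumerate}
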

	



	\section{Statements and proofs of results}

The main result provides lifted bi-Lagrangian structures on cotangent bundles
of suitable manifolds.

\begin{theorem}\label{Bitheo3}
		Let $M$ be a $2n$-dimensional parallelizable manifold endowed with a
		bi-Lagrangian structure $(\omega,\mathcal{F}_1,\mathcal{F}_2)$. Then
		$(d\theta,\mathcal{F}_1^{\pi},\mathcal{F}_2^{\pi})$
		is an affine bi-Lagrangian structure on $T^*M$. Moreover,
		$
		(\widetilde{\omega}:=\pi^*\omega+d\theta,\mathcal{F}_1^{\pi},\mathcal{F}_2^{\pi})
		$
		is a bi-Lagrangian structure on $T^*M$, and $(\omega,\mathcal{F}_1,\mathcal{F}_2)$ is affine if and only if
		$(\widetilde{\omega},\mathcal{F}_1^{\pi},\mathcal{F}_2^{\pi})$ is affine. More precisely, the Christoffel coefficients of the Hess connection of
		$(\widetilde{\omega},\mathcal{F}_1^{\pi},\mathcal{F}_2^{\pi})$ are scalar multiples (via $\omega$)
		of the Christoffel coefficients of the Hess connection of $(\omega,\mathcal{F}_1,\mathcal{F}_2)$.
	\end{theorem}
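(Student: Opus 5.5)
Work locally in adapted coordinates. Around a point of $M$, take a chart $U$ with coordinates $(p^1,\dots,p^n,q^1,\dots,q^n)$ adapted to the pair $(\mathcal{F}_1,\mathcal{F}_2)$. Using the global trivialization $T^*M\simeq M\times\mathbb{R}^{2n}$, we get coordinates $(p,q,\xi_1,\dots,\xi_{2n})$ on $\pi^{-1}(U)$. In these coordinates the distributions are
\[
\Gamma(\mathcal{F}_1^\pi)=\langle\partial_{p^i},\partial_{\xi_{n+i}}\rangle,\qquad \Gamma(\mathcal{F}_2^\pi)=\langle\partial_{q^i},\partial_{\xi_i}\rangle .
\]
Both are spanned by coordinate fields, so they are integrable by Frobenius. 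Together they span the whole tangent space, so they are transverse foliations of dimension $2n$ on the $4n$-dimensional manifold $T^*M$.

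\textbf{Choice of $d\theta$.} We identify $d\theta$ with the form $\sum_{i}d\xi_i\wedge dx^i$, where the base coordinates are ordered as $x=(p,q)$. This is an interpretive step: "$d\theta$" must be read through the parallelization, so that $\xi_i$ pairs with the $i$-th base direction. Written out,
\[
d\theta=\sum_{i=1}^n d\xi_i\wedge dp^i+\sum_{i=1}^n d\xi_{n+i}\wedge dq^i .
\]
We then reorder the fibre coordinates so that the pairing is between $\partial_{p^i}$ and $\partial_{\xi_i}$, which lie in different foliations. If the pairing came out within one foliation instead, we would swap the roles of $\xi_i$ and $\xi_{n+i}$ in the identification $\mathbb{R}^{2n}\simeq(\mathbb{R}^{2n})^*$, as the paper allows.

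With this choice, $d\theta$ has constant coefficients and each $\mathcal{F}_i^\pi$ is Lagrangian. Setting $P=(p,\xi_{n+1},\dots,\xi_{2n})$ and $Q=(q,\xi_1,\dots,\xi_n)$, up to sign $d\theta=\sum dQ^a\wedge dP^a$. So $(P,Q)$ is a chart adapted to $(d\theta,\mathcal{F}_1^\pi,\mathcal{F}_2^\pi)$. Since the coordinate fields are parallel for the flat connection with vanishing Christoffel symbols, that connection is torsion-free, parallelizes $d\theta$ and preserves both foliations. By the uniqueness in Theorem~\ref{b20} it is the Hess connection, so by Theorem~\ref{c10} the structure is affine.

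\textbf{The form $\widetilde{\omega}$.} First, $\widetilde{\omega}=\pi^*\omega+d\theta$ is closed. For nondegeneracy, write its matrix in the basis $(\partial_x,\partial_\xi)$: it is block $\begin{pmatrix}\omega & -I\\ I&0\end{pmatrix}$, which is invertible. For Lagrangianity of $\mathcal{F}_1^\pi$, check pairs of generators. Two horizontal generators in $\mathcal{F}_1$ pair to zero because $\mathcal{F}_1$ is $\omega$-Lagrangian. Pairs involving vertical generators are handled by the $d\theta$ computation above. The same works for $\mathcal{F}_2^\pi$.

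\textbf{Hess connection of $(\widetilde{\omega},\mathcal{F}_1^\pi,\mathcal{F}_2^\pi)$.} Let $\nabla$ be the Hess connection of $(\omega,\mathcal{F}_1,\mathcal{F}_2)$, with Christoffel symbols $\Gamma_{ij}^k$ in $(p,q)$. I would construct $\widetilde\nabla$ explicitly and verify the three properties, rather than use formula~(\ref{17c}). The ansatz is:
\[
\widetilde\nabla_{\partial_{x^i}}\partial_{x^j}=\Gamma_{ij}^k\partial_{x^k}+C_{ij}^{\,l}\partial_{\xi_l},
\qquad \widetilde\nabla \partial_{\xi}=0 \text{ in all directions}.
\]
Here the $C_{ij}^{\,l}$ are to be determined. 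Torsion-freeness requires $C$ to be symmetric in $i,j$. Preserving the foliations requires $C$ to respect the splitting: for $x^j$ a $p$-coordinate, $C_{ij}^{\,l}$ may only be nonzero for $l>n$.

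The condition $\widetilde\nabla\widetilde{\omega}=0$ is tested on $(\partial_{x^j},\partial_{x^m})$, since all other pairs are automatic. It reads
\[
\partial_i\omega_{jm}=\Gamma_{ij}^k\omega_{km}+\Gamma_{im}^k\omega_{jk}+C_{ij}^{\,m}-C_{im}^{\,j}
\]
(up to sign conventions). Since $\nabla\omega=0$, this reduces to $C_{ij}^{\,m}=C_{im}^{\,j}$, which together with symmetry means $C$ is totally symmetric. The natural choice compatible with the foliation constraint is $C_{ij}^{\,l}=\Gamma_{ij}^k\omega_{kl'}$ with a suitable index shift; this is the "scalar multiple via $\omega$" in the statement. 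The main obstacle is checking that this choice simultaneously satisfies the symmetry, foliation-preservation and $\widetilde\nabla\widetilde\omega=0$ conditions. If it fails, the fallback is to take $C=0$ and verify directly that the $\omega$-terms already cancel.

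\textbf{Equivalence of affineness.} Once $\widetilde\nabla$ is identified via Theorem~\ref{b20}, its curvature restricted to horizontal fields is $R_\nabla$ plus vertical terms built from $\omega$ and $\Gamma$, and all other components vanish. So $R_{\widetilde\nabla}=0$ if and only if $R_\nabla=0$. The "if" direction needs care with the vertical components: if $\nabla$ is flat, Theorem~\ref{c10} lets us choose $\Gamma=0$ and constant $\omega$, so $C=0$ and $\widetilde\nabla$ is flat.
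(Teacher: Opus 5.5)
The first half of your proposal agrees with the paper: involutivity, transversality, closedness and nondegeneracy of $\widetilde\omega$, the Lagrangian checks, and affineness of $(d\theta,\mathcal{F}_1^{\pi},\mathcal{F}_2^{\pi})$ via the connection with vanishing Christoffel symbols plus uniqueness. One slip: your $(P,Q)$ is mispaired. In $d\theta=\sum_a d\xi_a\wedge dx^a$ the coordinate $\xi_i$ pairs with $p^i$ and $\xi_{n+i}$ pairs with $q^i$, so there is no $dq\wedge dp$ term; a relabelling fixes this.

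\textbf{The ansatz for the lifted Hess connection.} This is the genuine gap. It is not true that only the pairs $(\partial_{x^j},\partial_{x^m})$ need testing. Since $\widetilde\omega(\partial_{x^j},\partial_{\xi_m})=-\delta_{jm}$ is constant and $\widetilde\nabla\partial_{\xi_m}=0$ in your ansatz,
\[
(\widetilde\nabla_{\partial_{x^i}}\widetilde\omega)(\partial_{x^j},\partial_{\xi_m})=-\Gamma_{ij}^k\,\widetilde\omega(\partial_{x^k},\partial_{\xi_m})=\Gamma_{ij}^m .
\]
So no choice of $C$, including your fallback $C=0$, gives $\widetilde\nabla\widetilde\omega=0$ unless $\Gamma\equiv0$ already. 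The paper's linear system records exactly this. Its last $2n$ rows (zero lower-right block, zero right-hand side) force the horizontal components $\tilde\Gamma^1_{ij},\dots,\tilde\Gamma^{2n}_{ij}$ to vanish. Its first $2n$ rows then give the vertical components $\partial_i\omega_{jk}=\Gamma^l_{ij}\omega_{lk}$ for $x^i,x^j$ tangent to the same foliation; for mixed pairs \eqref{17c} gives $0$. So the lifted connection is purely vertical: $\widetilde\nabla_{\partial_{x^i}}\partial_{x^j}=\Gamma^l_{ij}\omega_{lk}\,\partial_{\xi_k}$, with $\widetilde\nabla\partial_\xi=0$ and $\widetilde\nabla_{\partial_\xi}=0$. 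That is what ``scalar multiples via $\omega$'' means. It is not of your form ``$\Gamma$ plus a totally symmetric vertical $C$''.

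\textbf{The curvature step.} Consequently this step fails as written. $R_{\widetilde\nabla}$ has no horizontal part: $R_{\widetilde\nabla}(\partial_{x^a},\partial_{x^b})\partial_{x^c}=(\partial_aC^k_{bc}-\partial_bC^k_{ac})\,\partial_{\xi_k}$, with $C^k_{bc}=\Gamma^l_{bc}\omega_{lk}$ (zero for mixed $b,c$). This is not ``$R_\nabla$ plus vertical terms''. Your ``if'' argument is not available either. The $\xi_a$, the lifted foliations and the normal form of $d\theta$ are tied to the fixed trivialization, which is the coordinate coframe of one particular chart; this is what your interpretive step really assumes. So you cannot pass to a flat adapted chart from Theorem~\ref{c10} and keep $C=0$.

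\textbf{The claimed equivalence is false.} The paper's proof stops at the Christoffel computation and does not establish it either. Take $M=\mathbb{R}^2$ trivialized by $dp,dq$, with $\mathcal{F}_1,\mathcal{F}_2$ the lines along $\partial_p,\partial_q$, and $\omega=w\,dp\wedge dq$ with $w>0$. Then $\widetilde\nabla_{\partial_p}\partial_p=\partial_p w\,\partial_{\xi_2}$, $\widetilde\nabla_{\partial_q}\partial_p=0$ and $\widetilde\nabla\partial_{\xi_2}=0$, so
\[
R_\nabla(\partial_q,\partial_p)\partial_p=\partial_p\partial_q(\log w)\,\partial_p,\qquad R_{\widetilde\nabla}(\partial_q,\partial_p)\partial_p=\partial_p\partial_q w\,\partial_{\xi_2}.
\]
Hence $w=e^{p+q}$ gives an affine structure whose lift is not affine. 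Conversely, $w=e^p+e^q$ gives a non-affine structure whose lift is affine. What the computation does support is the Christoffel relation itself, and the one-way implication that $\widetilde\nabla$ is flat when $\Gamma\equiv0$ in the trivializing chart.
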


\begin{proof}
	Let$(M,\omega,\mathcal{F}_1,\mathcal{F}_2)$ be a $2n$-dimensional parallelizable bi-Lagragian manifold.
	 We shall prove that $(T^*M,\widetilde{\omega},
	\mathcal{F}_1^{\pi},\mathcal{F}_2^{\pi})$
	is a bi-Lagrangian manifold.
	Let
	$
	(U,p^1,\dots,p^n,q^1,\dots,q^n)
	$
	be a coordinate chart adapted to $(\mathcal{F}_1,\mathcal{F}_2)$. Thus,
	locally,
	\[
	\Gamma(\mathcal{F}_1)
	=
	\left\langle
	\frac{\partial}{\partial p^1},\dots,
	\frac{\partial}{\partial p^n}
	\right\rangle,
	\quad
	\Gamma(\mathcal{F}_2)
	=
	\left\langle
	\frac{\partial}{\partial q^1},\dots,
	\frac{\partial}{\partial q^n}
	\right\rangle.
	\]

	Consider the associated coordinate chart on $U\times\mathbb{R}^{2n}$,
	\[
	(U\times\mathbb{R}^{2n},
	p^1,\dots,p^n,q^1,\dots,q^n,\xi_1,\dots,\xi_{2n}).
	\]
	With respect to this chart, the lifted distributions are given by
	\begin{equation}\label{b9}
	\begin{cases}
	\Gamma(\mathcal{F}_1^{\pi})
	=
	\left\langle
	\frac{\partial}{\partial p^1},\dots,
	\frac{\partial}{\partial p^n},
	\frac{\partial}{\partial \xi_{n+1}},\dots,
	\frac{\partial}{\partial \xi_{2n}}
	\right\rangle,
	\\[0.25cm]
	\Gamma(\mathcal{F}_2^{\pi})
	=
	\left\langle
	\frac{\partial}{\partial q^1},\dots,
	\frac{\partial}{\partial q^n},
	\frac{\partial}{\partial \xi_1},\dots,
	\frac{\partial}{\partial \xi_n}
	\right\rangle.
	\end{cases}
	\end{equation}
	
	Let remember that the Liouville
	2-form is given by
	\begin{equation}\label{Bieq5}
	d\theta
	=
	\sum_{i=1}^n d\xi_i\wedge dp^i
	+
	\sum_{i=1}^n d\xi_{n+i}\wedge dq^i.
\end{equation}
	That is,
\begin{equation}\label{Bieq1}
	\widetilde{\omega}
	=
	\pi^*\omega+d\theta
	=
		\sum_{0\leq i<j\leq 2n}\omega_{ij}\circ\pi dx^i\wedge dx^j
	+
	\sum_{i=1}^n d\xi_i\wedge dp^i
	+
	\sum_{i=1}^n d\xi_{n+i}\wedge dq^i
\end{equation}
where $(x^1,\dots,x^{2n}):=(p^1,\dots,p^n,q^1,\dots,q^n)$ and $(\omega_{ij})=\omega$.

Since $\omega$ is closed and $d^2=0$, we have
$d\widetilde{\omega} = \pi^* d\omega + d^2\theta = 0$.

Moreover, $\widetilde{\omega}$ is non-degenerate. Indeed, $d\theta$ is the canonical symplectic form on the cotangent bundle $T^*M$, and adding the pullback $\pi^*\omega$ of the symplectic form $\omega$ by $\pi : T^*M \longrightarrow M$ preserves non-degeneracy in the expression \eqref{Bieq1}. Thus
$\widetilde{\omega}$ is a symplectic form on $T^*M$.
	
	We now prove that $\mathcal{F}_1^{\pi}$ and $\mathcal{F}_2^{\pi}$ are
	Lagrangian distributions. From \eqref{b9}, both distributions have rank $2n$,
	which is half of the dimension of $T^*M$. Moreover, using
	the local expression of $\widetilde{\omega}$, one checks immediately that
	\[
	\widetilde{\omega}(X,Y)=0,
	\quad
	X,Y\in \Gamma(\mathcal{F}_i^{\pi}),\quad i=1,2.
	\]
	Consequently, $\mathcal{F}_1^{\pi}$ and $\mathcal{F}_2^{\pi}$ are Lagrangian
	distributions with respect to $\widetilde{\omega}$ on $T^*M$. The verification
	of the Lagrangian condition with respect to the canonical symplectic form
	$d\theta$ is simpler.
	
	Furthermore, by \eqref{b9}, we have
	\[
	T(T^*M)
	=
	T\mathcal{F}_1^{\pi}\oplus T\mathcal{F}_2^{\pi}.
	\]
	Thus the two lifted distributions are transverse.
	
	It remains to prove that the distributions are completely integrable. Again,
	this follows directly from the local description \eqref{b9}. Indeed,
	$\mathcal{F}_1^{\pi}$ is locally generated by the coordinate vector fields
	\[
	\frac{\partial}{\partial p^1},\dots,
	\frac{\partial}{\partial p^n},
	\frac{\partial}{\partial \xi_{n+1}},\dots,
	\frac{\partial}{\partial \xi_{2n}},
	\]
	which commute pairwise. Therefore $\mathcal{F}_1^{\pi}$ is involutive.
	Similarly, $\mathcal{F}_2^{\pi}$ is locally generated by the commuting
	coordinate vector fields
	\[
	\frac{\partial}{\partial q^1},\dots,
	\frac{\partial}{\partial q^n},
	\frac{\partial}{\partial \xi_1},\dots,
	\frac{\partial}{\partial \xi_n},
	\]
	and is therefore involutive. By Frobenius' theorem, both
	$\mathcal{F}_1^{\pi}$ and $\mathcal{F}_2^{\pi}$ are completely integrable.
	
Consequently, the triples
$(\widetilde{\omega},\mathcal{F}_1^{\pi},\mathcal{F}_2^{\pi})$ and
$(d\theta,\mathcal{F}_1^{\pi},\mathcal{F}_2^{\pi})$ define bi-Lagrangian
structures on $T^*M$. Moreover, by combining the equalities in \eqref{b9}
and \eqref{Bieq5} with Theorem~\ref{c10}, it follows that
$(d\theta,\mathcal{F}_1^{\pi},\mathcal{F}_2^{\pi})$ is an affine
bi-Lagrangian structure.

Now, let $\nabla_{\partial_i}\partial_j=\Gamma_{ij}^l\partial_l$, $i,j,l\in[2n]$, be the Hess connection of $(\omega,\mathcal{F}_1,\mathcal{F}_2)$ with $\Gamma_{ij}^l$, $i,j,l\in[2n]$, its Christoffel coefficients. Let $\tilde{\nabla}_{\partial_i}\partial_j=\tilde{\Gamma}_{ij}^l\partial_l$, $i,j,l\in[4n]$, be the Hess connection of $(\widetilde{\omega},\mathcal{F}_1^{\pi},\mathcal{F}_2^{\pi})$. We denote by $\omega$ a local matrix of $\omega$.

From \eqref{17c}, for fixed $i,j\in[4n]$, since $d\theta$ has constant coefficients, we have
\begin{equation*}
	\widetilde{\omega}\left(\partial_l,\partial_k\right)\tilde{\Gamma}_{ij}^l=\partial_i \omega(\partial_j,\partial_k),\quad k\in[4n].
\end{equation*}
By combining this with \eqref{Bieq1}, we obtain
\begin{equation*}
	\begin{pmatrix} -\omega & I\\ -I & 0 \end{pmatrix}
	\begin{pmatrix}
		\tilde{\Gamma}_{ij}^1,\dots,\tilde{\Gamma}_{ij}^{2n},\tilde{\Gamma}_{ij}^{2n+1},\dots, \tilde{\Gamma}_{ij}^{4n}
	\end{pmatrix}^t
	=
	\begin{pmatrix}\partial_i\omega_{j1},\dots,\partial_i\omega_{j,2n},0,\dots,0\end{pmatrix}^t.
\end{equation*}
That is,
\begin{align*}
	\begin{pmatrix}
		\tilde{\Gamma}_{ij}^1,\dots,\tilde{\Gamma}_{ij}^{2n},\tilde{\Gamma}_{ij}^{2n+1},\dots, \tilde{\Gamma}_{ij}^{4n}
	\end{pmatrix}^t
	&=
	\begin{pmatrix}0,\dots,0,\partial_i\omega_{j1},\dots,\partial_i\omega_{j2n}\end{pmatrix}^t\\
	&=
	\begin{pmatrix}0,\dots,0,-\begin{pmatrix}\Gamma_{ij}^1,\dots, \Gamma_{ij}^{2n}\end{pmatrix}^t\omega\end{pmatrix}^t .
\end{align*}

	This completes the proof of Theorem~\ref{Bitheo3}.
	
\end{proof}
Since the tangent bundle of a parallelizable manifold is again parallelizable,
we obtain the following consequence.
\begin{corollary}\label{Bicor1}
	Every bi-Lagrangian $2n$-manifold
	$(M,\omega,\mathcal{F}_1,\mathcal{F}_2)$ satisfying the hypotheses of
	Theorem~\ref{Bitheo3} is infinitely liftable; that is,
	\[
	\operatorname{Lift}^{k}(M,\omega,\mathcal{F}_1,\mathcal{F}_2)
	\]
	exists for every $k\in\mathbb{N}$.
\end{corollary}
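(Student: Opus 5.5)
The plan is to argue by induction on $k$, using Theorem~\ref{Bitheo3} as the inductive step. The only thing to track is that its hypotheses survive one application of $\operatorname{Lift}$. For $k=0$ there is nothing to prove: $\operatorname{Lift}^0$ is the original bi-Lagrangian manifold, which is parallelizable by assumption. For $k=1$, Theorem~\ref{Bitheo3} gives the bi-Lagrangian structure $(\widetilde{\omega},\mathcal{F}_1^{\pi},\mathcal{F}_2^{\pi})$ on $T^*M$, or the affine one $(d\theta,\mathcal{F}_1^{\pi},\mathcal{F}_2^{\pi})$, depending on which version of $\omega^\pi$ one uses. Either choice works for what follows.

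Suppose $\operatorname{Lift}^{k}(M,\omega,\mathcal{F}_1,\mathcal{F}_2)=(N_k,\omega_k,\mathcal{F}_1^{(k)},\mathcal{F}_2^{(k)})$ is a bi-Lagrangian manifold of dimension $2^{k+1}n$ with $N_k$ parallelizable. To apply Theorem~\ref{Bitheo3} to it, and so obtain $\operatorname{Lift}^{k+1}$ as a bi-Lagrangian structure on $T^*N_k$, I only need to show that $N_{k+1}=T^*N_k$ is again parallelizable. The dimension $2^{k+2}n$ is automatically even, so that condition is free.

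The key step is therefore this: if $N$ is parallelizable of dimension $m$, then so is $T^*N$.
\begin{enumerate}
\item The trivialization of $TN$ gives, through $\omega^\flat$ or directly through the dual frame, a diffeomorphism $T^*N\simeq N\times\mathbb{R}^{m}$ of total spaces.
\item A product of parallelizable manifolds is parallelizable, by the splitting $T(M_1\times M_2)\simeq \pi_1^*TM_1\oplus\pi_2^*TM_2$ recalled in Subsection~\ref{sub1}.
\item $\mathbb{R}^m$ is parallelizable, being covered by a single chart.
\end{enumerate}
Hence $T^*N\simeq N\times\mathbb{R}^m$ lies in $\mathcal{M}^\pi$. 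Concretely, a global frame $(e_1,\dots,e_m)$ of $TN$ together with the coordinate fields $\partial/\partial\xi_1,\dots,\partial/\partial\xi_m$ gives a global frame of $T(N\times\mathbb{R}^m)$.

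The only point needing care, and the expected obstacle, is consistency of the construction. At each stage the definition of $\mathcal{F}_i^{\pi}$ uses a chosen trivialization $T^*N_k\simeq N_k\times\mathbb{R}^{2^{k+1}n}$ and its fibre coordinates $\xi$. So the parallelization of $N_{k+1}$ produced above must be fixed before the next lift is taken. I would record it explicitly: $N_{k+1}=N_k\times\mathbb{R}^{2^{k+1}n}$, which unwinds to $N_k\simeq M\times\mathbb{R}^{2n(2^k-1)}$ (the dimensions satisfy $2^{k+1}n=2n+2n(2^k-1)$). With this choice the lifted foliations at each stage are well defined, and Theorem~\ref{Bitheo3} applies verbatim, closing the induction.
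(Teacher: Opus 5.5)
Your proof is correct and takes the same route as the paper, which justifies the corollary in one line: the (co)tangent bundle of a parallelizable manifold is again parallelizable, so Theorem~\ref{Bitheo3} can be applied repeatedly. Your explicit induction, the product argument $T^*N\simeq N\times\mathbb{R}^m$, and the remark about fixing the trivialization at each stage simply spell out details the paper leaves implicit.
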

\begin{corollary}
From Lemma~\ref{Bilem1} and Theorem~\ref{Bitheo3}, and with the notation of
Theorem~\ref{Bitheo3}, the triples
\[
\left(
(\omega^\flat)^*\omega^\pi,
\omega^\sharp_*\mathcal{F}_1^{\pi},
\omega^\sharp_*\mathcal{F}_2^{\pi}
\right)
\]
and
\[
\left(
(\omega^\flat)^*\omega^\pi+\omega^\pi,
\,
\omega^\sharp_*\mathcal{F}_1^{\pi}+\mathcal{F}_1^{\pi},
\,
\omega^\sharp_*\mathcal{F}_2^{\pi}+\mathcal{F}_2^{\pi}
\right)
\]
define bi-Lagrangian structures on $TM$ and $W=TM\oplus T^*M$, respectively. Here $\omega^\pi=d\theta$
\quad\text{or}\quad
$\omega^\pi=\pi^*\omega+d\theta$.
Moreover, if $\omega^\pi=d\theta$, then both structures are affine, and if 
$\omega^\pi=\pi^*\omega+d\theta$, both structures are affine if and only if the original structure is affine.
\end{corollary}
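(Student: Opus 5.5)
The plan is to transport the structures of Theorem~\ref{Bitheo3} from $T^*M$ to $TM$ through the bundle isomorphism $\omega^\flat:TM\to T^*M$, and then to put them on $W$. Lemma~\ref{Bilem1} is referred to in the statement but not reproduced above; I take it to say that bi-Lagrangian structures, and affineness, are transported by diffeomorphisms. Everything reduces to one naturality fact. If $\psi:N_1\to N_2$ is a diffeomorphism and $(\sigma,\mathcal{G}_1,\mathcal{G}_2)$ is bi-Lagrangian on $N_2$, then $(\psi^*\sigma,(\psi^{-1})_*\mathcal{G}_1,(\psi^{-1})_*\mathcal{G}_2)$ is bi-Lagrangian on $N_1$. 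Moreover, its Hess connection is the pull-back $\psi^*\nabla$ of the Hess connection $\nabla$ of the original structure.

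The first step is to prove this naturality fact, which is what I expect Lemma~\ref{Bilem1} to contain. Closedness and nondegeneracy are preserved by pull-back. The leaves of $(\psi^{-1})_*\mathcal{G}_i$ are preimages of leaves, so these are foliations, and by \eqref{Bieq2} their sections are $(\psi^{-1})_*\Gamma(\mathcal{G}_i)$. The Lagrangian condition follows from $\psi^*\sigma(\psi^{-1}_*X,\psi^{-1}_*Y)=\sigma(X,Y)\circ\psi$, and transversality is clear since $T\psi$ is a linear isomorphism. The pulled-back connection $\psi^*\nabla$ is torsion-free, parallelizes $\psi^*\sigma$, and preserves both pulled-back foliations. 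By the uniqueness in Theorem~\ref{b20}, it is the Hess connection. Its curvature is $\psi^*R_\nabla$, so one structure is affine if and only if the other is.

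The second step applies this with $\psi=\omega^\flat$, $N_1=TM$, $N_2=T^*M$ and $(\sigma,\mathcal{G}_1,\mathcal{G}_2)=(\omega^\pi,\mathcal{F}_1^\pi,\mathcal{F}_2^\pi)$. Since $(\omega^\flat)^{-1}=\omega^\sharp$, this gives exactly the stated triple on $TM$. The affineness statements then come from Theorem~\ref{Bitheo3}. With $\omega^\pi=d\theta$ the lift is affine. With $\omega^\pi=\pi^*\omega+d\theta$ the lift is affine if and only if $(\omega,\mathcal{F}_1,\mathcal{F}_2)$ is, and pulling back by $\omega^\flat$ preserves flatness.

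The third step, the structure on $W=TM\oplus T^*M$, is the main obstacle, because the formula in the statement is ambiguous. Read literally as a sum of forms on the rank-$4n$ bundle $W$, the $2$-forms live on different spaces. The interpretation I would commit to is that $W$ denotes the total space of $TM\oplus T^*M$, and that the sums denote the product structure on $TM\times_M T^*M$, or more simply on the disjoint pieces $TM$ and $T^*M$. On a product $N_1\times N_2$ with projections $\mathrm{pr}_1,\mathrm{pr}_2$, I would check directly the following:
\begin{enumerate}
\item $\mathrm{pr}_1^*\sigma_1+\mathrm{pr}_2^*\sigma_2$ is symplectic;
\item the foliations $\mathcal{G}^1_i\times\mathcal{G}^2_i$ are Lagrangian and transverse;
\item the Hess connection is the product connection, which is flat if and only if both factors are flat.
\end{enumerate}
Since both factors are affine in the first case, and are simultaneously affine or not in the second, the final claim follows. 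If the intended $W$ is instead the fibered sum over $M$, the dimension count is $6n$. The naive sum then is not obviously nondegenerate, and I would have to fall back on the parallelizable identification $W\simeq M\times\mathbb{R}^{4n}$ and redo the local computation of Theorem~\ref{Bitheo3}. I flag this as the uncertain point.
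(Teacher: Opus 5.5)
For $TM$ your argument is the paper's. You use Lemma~\ref{Bilem1}, which sits in the Appendix and says exactly what you reconstructed: it is phrased as push-forward by $\psi=\omega^\sharp$, which is your pull-back by $\omega^\flat=(\omega^\sharp)^{-1}$. You apply it to the two lifts of Theorem~\ref{Bitheo3}, and affineness carries over because the Hess connection does. For $W$ the paper gives no argument beyond citing those two results. Your product check therefore actually carries that part: block-diagonal symplectic form, product Lagrangian foliations, and a product Hess connection whose curvature splits, so it is flat iff both factors are. But it proves the claim on the plain product $TM\times T^*M$, of dimension $8n$.

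So you should not write $TM\times_M T^*M$. That fibred product \emph{is} the total space of the Whitney sum, of dimension $6n$, and there the stated sum is degenerate, not merely ``not obviously nondegenerate''. Let $\mathrm{pr}_1,\mathrm{pr}_2$ be its projections to $TM$ and $T^*M$. Let $\Phi(v,\alpha)=\omega^\flat(v)+\alpha$, a submersion onto $T^*M$ with $2n$-dimensional fibres. Since $\theta_\beta(w)=\beta(T\pi\,w)$ and $\pi\circ\omega^\flat\circ\mathrm{pr}_1=\pi\circ\mathrm{pr}_2=\pi\circ\Phi$, one gets
\[
\mathrm{pr}_1^*(\omega^\flat)^*\theta+\mathrm{pr}_2^*\theta=\Phi^*\theta
\quad\text{and}\quad
\mathrm{pr}_1^*(\omega^\flat)^*\pi^*\omega+\mathrm{pr}_2^*\pi^*\omega=\Phi^*(2\pi^*\omega).
\]
Hence the proposed form is $\Phi^*d\theta$, respectively $\Phi^*(2\pi^*\omega+d\theta)$, and its kernel contains $\ker T\Phi$. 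No recomputation on $W\simeq M\times\mathbb{R}^{4n}$ can save the formula as written. You should commit outright to the product reading and drop both the $\times_M$ and the disjoint-union alternative. Note that this is the reading under which the corollary, as formulated, is true.
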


Observe that every symplectomorphism of $(M,\omega)$ admits a lift to a
symplectomorphism of $(T^*M,\widetilde{\omega})$, as follows
from Proposition~\ref{lifting of symplecto}. Moreover, by
Theorem~\ref{Bitheo3}, every bi-Lagrangian structure
$(\omega,\mathcal{F}_1,\mathcal{F}_2)$ on $M$ induces a bi-Lagrangian structure $(\omega^\pi,\mathcal{F}_1^{\pi},\mathcal{F}_2^{\pi})$
on $T^*M $. 
It is therefore natural to investigate the compatibility between this lifting
procedure and the action $\triangleright$. Combining Theorem~\ref{Bitheo3} with
Theorem~\ref{Bitheo2}, we obtain the following result.

	\begin{corollary}\label{cor2}
If a  lift of $(\omega,\mathcal{F}_1,\mathcal{F}_2)$ exists for every $(\omega,\mathcal{F}_1,\mathcal{F}_2)$, then the action $\triangleright$ can be lifted infinitely in a similar sense as in Corollary~\ref{Bicor1}.\end{corollary}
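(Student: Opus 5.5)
The plan is to reduce Corollary~\ref{cor2} to an equivariance statement for a single lift and then iterate. The action $\triangleright$ of $\operatorname{Symp}(M,\omega)$ on $\mathcal{B}_l(M)$ (Theorem~\ref{Bitheo2}, from \cite{TNB}) is by push-forward:
\[
\psi\triangleright(\omega,\mathcal{F}_1,\mathcal{F}_2)=(\omega,\psi_*\mathcal{F}_1,\psi_*\mathcal{F}_2),
\]
using \eqref{Bieq2}. For $\psi\in\operatorname{Symp}(M,\omega)$ I will take the cotangent lift $\widehat{\psi}=T^*(\psi^{-1})$ of Proposition~\ref{lifting of symplecto}. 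I then define the lifted action on the image of $\operatorname{Lift}$ by
\[
\widehat{\psi}\,\widehat{\triangleright}\,\operatorname{Lift}(\omega,\mathcal{F}_1,\mathcal{F}_2):=\operatorname{Lift}(\psi\triangleright(\omega,\mathcal{F}_1,\mathcal{F}_2)).
\]

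First, I check that $\widehat{\psi}$ preserves the lifted symplectic form. By Proposition~\ref{lifting of symplecto}, $\widehat{\psi}^*d\theta=d\theta$. Since $\pi\circ\widehat{\psi}=\psi\circ\pi$, we also get $\widehat{\psi}^*\pi^*\omega=\pi^*\psi^*\omega=\pi^*\omega$. Hence $\widehat{\psi}\in\operatorname{Symp}(T^*M,\omega^\pi)$ for both choices of $\omega^\pi$ in Theorem~\ref{Bitheo3}.

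Second, I check that $\psi\mapsto\widehat{\psi}$ is a group homomorphism. This holds because $\widehat{\psi\circ\phi}=T^*((\psi\phi)^{-1})=\widehat{\psi}\circ\widehat{\phi}$, which is functoriality of $T^*$. Consequently the rule above is a genuine action on the set of lifted structures, and it is intertwined with $\triangleright$ by $\operatorname{Lift}$, i.e. $\operatorname{Lift}(\psi\triangleright s)=\widehat{\psi}\,\widehat{\triangleright}\,\operatorname{Lift}(s)$.

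For the infinite lifting, the hypothesis ensures that the lift exists for every bi-Lagrangian structure, and hence by Corollary~\ref{Bicor1} at every stage. I induct on $k$. Suppose $\triangleright_k$ is an action of a group $G_k$ on structures over the $k$-th iterated cotangent bundle, by symplectomorphisms of $\omega^{\pi,k}$ fixing the form. Then steps one and two, applied with $M$ replaced by that bundle, give $G_{k+1}=\{\widehat{g}: g\in G_k\}$ acting compatibly with $\operatorname{Lift}^{k+1}$. Again by Corollary~\ref{Bicor1}, $T^*M$ is parallelizable, so Theorem~\ref{Bitheo3} reapplies at each step.

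The main obstacle is well-definedness. The foliations $\mathcal{F}_i^{\pi}$ are built from a chosen trivialization $T^*M\simeq M\times\mathbb{R}^{2n}$, and $\widehat{\psi}$ need not preserve that trivialization. So one cannot in general hope for $\widehat{\psi}_*\mathcal{F}_i^{\pi}=(\psi_*\mathcal{F}_i)^{\pi}$, and I avoid this identity. Instead I define the lifted action by transport through $\operatorname{Lift}$ as above. The only point to check is that $\operatorname{Lift}$ is injective on the first two components, so that the action on its image is well defined. This holds because $\mathcal{F}_i$ is recovered as $\pi_*$ of $\mathcal{F}_i^{\pi}$, namely the horizontal part in \eqref{b9}, and $\omega$ is recovered as the horizontal block of $\omega^\pi$ in \eqref{Bieq1} (in the $d\theta$ case $\omega$ is fixed by the setting).
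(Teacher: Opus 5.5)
Your argument is correct for the notion of lifted action you adopt, but it is not the paper's route. The paper combines Theorem~\ref{Bitheo3} and Corollary~\ref{Bicor1} with Theorem~\ref{Bitheo2} applied \emph{on the lifted manifold}. Each $\operatorname{Lift}^k$ is again a bi-Lagrangian manifold, so it carries its own push-forward action $\triangleright$ of its full symplectomorphism group. The cotangent lifts $\widehat{\psi}$ lie in that group by Proposition~\ref{lifting of symplecto}, and the lifted structures lie in the set acted upon.

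You never use Theorem~\ref{Bitheo2} upstairs. Instead you transport the base action through $\operatorname{Lift}$. This needs only that $\operatorname{Lift}$ is injective (your recovery $T\mathcal{F}_i=T\pi(T\mathcal{F}_i^{\pi})$) and that $\psi\mapsto\widehat{\psi}$ is an \emph{injective} homomorphism. Injectivity, which you should state, follows from $\pi\circ\widehat{\psi}=\psi\circ\pi$.

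What each route buys:
\begin{itemize}
\item The paper's route yields a genuinely geometric action, by $\widehat{\psi}_*$, at every level, but no compatibility with $\operatorname{Lift}$.
\item Yours yields the equivariance $\operatorname{Lift}(\psi\triangleright s)=\widehat{\psi}\,\widehat{\triangleright}\,\operatorname{Lift}(s)$, but only by definition. Here $\widehat{\psi}$ acts as a label rather than through push-forward. So your first step ($\widehat{\psi}^*\omega^{\pi}=\omega^{\pi}$) plays no role in the action axioms; it only places the acting group inside $\operatorname{Symp}(T^*M,\omega^{\pi})$.
\end{itemize}
That first step does fill in the $\pi^*\omega$ part. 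The paper attributes this to Proposition~\ref{lifting of symplecto}, although that proposition covers only $d\theta$.

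Your caveat that $\widehat{\psi}_*\mathcal{F}_i^{\pi}\neq(\psi_*\mathcal{F}_i)^{\pi}$ is right, and it is exactly what the paper's ``combining'' leaves open. However, the cause is not only the trivialization. Take $M=\mathbb{R}^2$ with $\omega=dq\wedge dp$, the standard trivialization, $\mathcal{F}_1$ the lines $q=\mathrm{const}$, and $\psi$ the rotation by angle $t$.
\begin{itemize}
\item The map $\widehat{\psi}(x,\xi)=(\psi(x),\xi\,(D\psi)^{-1})$ is a product map, so it sends horizontal lifts to horizontal lifts.
\item Yet it sends $\partial_{\xi_2}$ to $-\sin t\,\partial_{\xi_1}+\cos t\,\partial_{\xi_2}$, mixing the two blocks of fibre coordinates. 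Hence for $\sin t\neq0$, $\widehat{\psi}_*\mathcal{F}_1^{\pi}$ does not contain $\partial_{\xi_2}$, and $\widehat{\psi}\triangleright\operatorname{Lift}(s)$ is not even in the image of $\operatorname{Lift}$.
\item Worse, $(\psi_*\mathcal{F}_1)^{\pi}=\langle\cos t\,\partial_p+\sin t\,\partial_q,\ \partial_{\xi_2}\rangle$ is not $d\theta$-isotropic when $\sin t\neq0$.
\end{itemize}
So the hypothesis of the corollary does real work in your transport: it is what makes $\operatorname{Lift}(\psi\triangleright s)$ bi-Lagrangian. It is therefore safer to take existence at each stage from that hypothesis rather than from Corollary~\ref{Bicor1}.

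Finally, parallelizability of $T^*M$ comes from $T(T^*M)\cong\pi^*TM\oplus\pi^*T^*M$, as in the remark before Corollary~\ref{Bicor1}, not from the corollary itself.
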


\section{Appendix}
The first result concerns the push-forward of a bi-Lagrangian structure by a
diffeomorphism. More precisely, it relates the Hess connection of the induced
bi-Lagrangian structure to the Hess connection of the original structure. 
Before stating it, we need to clarify the following point.
\begin{remark}\label{act}
	Let $(M,\omega)$ be a symplectic manifold, and let
	$\psi,\varphi:M\longrightarrow M$ be two diffeomorphisms. Recall that
	\[
	(\psi\circ\varphi)_*=\psi_*\circ\varphi_*.
	\]
	
	For every connection $\nabla\in \operatorname{Conn}(M)$ and every diffeomorphism
	$\psi$ of $M$, define a new connection $\nabla^\psi$ by
	\[
	(\nabla^\psi)_X Y
	=
	\psi_*\left(
	\nabla_{\psi_*^{-1}X}\,\psi_*^{-1}Y
	\right),
	\quad X,Y\in\mathfrak{X}(M),
	\]
	where $\psi_*^{-1}=(\psi^{-1})_*$. Then
	$\nabla^\psi\in \operatorname{Conn}(M)$.
	
	Moreover, for all diffeomorphisms $\psi,\varphi:M\to M$, one has
	\[
	\nabla^{\psi\circ\varphi}=(\nabla^\varphi)^\psi.
	\]
	Indeed, for every $X,Y\in\mathfrak{X}(M)$,
	\begin{align*}
	((\nabla^\varphi)^\psi)_X Y
	&=
	\psi_*\left(
	(\nabla^\varphi)_{\psi_*^{-1}X}\,\psi_*^{-1}Y
	\right) \\
	&=
	\psi_*\left[
	\varphi_*\left(
	\nabla_{\varphi_*^{-1}\psi_*^{-1}X}
	\,
	\varphi_*^{-1}\psi_*^{-1}Y
	\right)
	\right] \\
	&=
	(\psi\circ\varphi)_*
	\left(
	\nabla_{(\psi\circ\varphi)_*^{-1}X}
	\,
	(\psi\circ\varphi)_*^{-1}Y
	\right) \\
	&=
	(\nabla^{\psi\circ\varphi})_X Y.
	\end{align*}
	Therefore, the symplectomorphism group
	$\operatorname{Symp}(M,\omega)$ acts on the left on $\mathfrak{X}(M)$ and on
	$\operatorname{Conn}(M)$ as follows:
	\begin{enumerate}
		\item On vector fields:
		\[
		\operatorname{Symp}(M,\omega)\times\mathfrak{X}(M)
		\longrightarrow
		\mathfrak{X}(M),
		\quad
		(\psi,X)\longmapsto \psi_*X.
		\]
		
		\item On linear connections:
		\[
		\operatorname{Symp}(M,\omega)\times\operatorname{Conn}(M)
		\longrightarrow
		\operatorname{Conn}(M),
		\quad
		(\psi,\nabla)\longmapsto \nabla^\psi.
		\]
	\end{enumerate}
\end{remark}

\begin{lemma}\label{Bilem1}
	Let $(M,\omega,\mathcal{F}_1,\mathcal{F}_2)$ be a bi-Lagrangian manifold. Let $N$ be a manifold diffeomorphic to
	$M$. Then, for any diffeomorphism $\psi:M\longrightarrow N$, the 	
triple
$
\left((\psi^{-1})^*\omega,\psi_*\mathcal{F}_1,\psi_*\mathcal{F}_2\right)
$
defines a bi-Lagrangian structure on $N$. 

Moreover, if $\nabla$ denotes the Hess connection of
$(\omega,\mathcal{F}_1,\mathcal{F}_2)$, then the Hess connection of	$\left((\psi^{-1})^*\omega,\psi_*\mathcal{F}_1,\psi_*\mathcal{F}_2\right)
$
is the pushed-forward connection $\nabla^\psi$. Furthermore, the structure
$(\omega,\mathcal{F}_1,\mathcal{F}_2)$ is affine if and only if the structure $\left((\psi^{-1})^*\omega,\psi_*\mathcal{F}_1,\psi_*\mathcal{F}_2\right)$
is affine.
\end{lemma}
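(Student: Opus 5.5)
The strategy is naturality: every object in the definition of a bi-Lagrangian structure and of the Hess connection is transported by $\psi$, and then we invoke the uniqueness in Theorem~\ref{b20}. Write $\omega'=(\psi^{-1})^*\omega$ and $\mathcal{F}_i'=\psi_*\mathcal{F}_i$.

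\emph{Step 1: $(\omega',\mathcal{F}_1',\mathcal{F}_2')$ is bi-Lagrangian.}
\begin{itemize}
\item $\omega'$ is closed, because $d$ commutes with pullbacks: $d\omega'=(\psi^{-1})^*d\omega=0$.
\item $\omega'$ is nondegenerate, since $\psi^{-1}$ is a diffeomorphism and so $T\psi^{-1}$ is a linear isomorphism at each point.
\item $\psi_*\mathcal{F}_i$ is a foliation of the same dimension: composing adapted charts of $\mathcal{F}_i$ with $\psi^{-1}$ gives foliated charts on $N$. Its sections are $\psi_*\Gamma(\mathcal{F}_i)$ by \eqref{Bieq2}.
\item The basic identity is
\[
\omega'(\psi_*X,\psi_*Y)=\omega(X,Y)\circ\psi^{-1}.
\]
Every vector field on $N$ has the form $\psi_*Y$. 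So $\psi_*Y$ is $\omega'$-orthogonal to all of $\psi_*\Gamma(\mathcal{F}_i)$ if and only if $Y\in\Gamma(\mathcal{F}_i)^{\perp_\omega}=\Gamma(\mathcal{F}_i)$. Hence each $\mathcal{F}_i'$ is Lagrangian.
\item Transversality holds because $T\psi$ is an isomorphism, giving $TN=T\mathcal{F}_1'\oplus T\mathcal{F}_2'$.
\end{itemize}

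\emph{Step 2: the Hess connection is $\nabla^\psi$.} Remark~\ref{act} defines $\nabla^\psi$ for self-maps, but the same formula $(\nabla^\psi)_XY=\psi_*(\nabla_{\psi_*^{-1}X}\psi_*^{-1}Y)$ makes sense for $\psi:M\to N$. First check that it is a connection on $N$. It is clearly $\mathbb{R}$-bilinear. For the Leibniz rule and $C^\infty$-linearity, use $\psi_*^{-1}(fY)=(f\circ\psi)\,\psi_*^{-1}Y$ and $(\psi_*^{-1}X)(f\circ\psi)=(Xf)\circ\psi$. Then verify the three properties characterizing the Hess connection.
\begin{itemize}
\item \emph{Torsion-free.} Naturality of the bracket, $\psi_*[X,Y]=[\psi_*X,\psi_*Y]$, gives $T_{\nabla^\psi}(X,Y)=\psi_*T_\nabla(\psi_*^{-1}X,\psi_*^{-1}Y)=0$.
\item \emph{Parallel symplectic form.} Apply \eqref{Bieq3} to $\psi_*^{-1}X,\psi_*^{-1}Y,\psi_*^{-1}Z$ and compose with $\psi^{-1}$, using the identity from Step 1. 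This gives $\nabla^\psi\omega'=0$.
\item \emph{Preserves the foliations.} If $Y\in\psi_*\Gamma(\mathcal{F}_i)$, then $\psi_*^{-1}Y\in\Gamma(\mathcal{F}_i)$. By \eqref{Bieq4}, $\nabla_{\psi_*^{-1}X}\psi_*^{-1}Y\in\Gamma(\mathcal{F}_i)$, and pushing forward lands in $\Gamma(\mathcal{F}_i')$.
\end{itemize}
Theorem~\ref{b20} then forces $\nabla^\psi$ to be the Hess connection of the new structure.

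\emph{Step 3: affineness.} The same bracket naturality gives
\[
R_{\nabla^\psi}(X,Y)Z=\psi_*\bigl(R_\nabla(\psi_*^{-1}X,\psi_*^{-1}Y)\psi_*^{-1}Z\bigr).
\]
So one curvature vanishes identically if and only if the other does. For the converse direction, apply the result to $\psi^{-1}$.

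The only delicate point is local: all identities must be read with the correct base point, composing functions with $\psi$ or $\psi^{-1}$ as appropriate. I expect no real obstacle beyond this bookkeeping, which is most cleanly done by checking the identities on $\psi$-related vector fields.
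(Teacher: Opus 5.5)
Your proposal is correct and follows the paper's proof essentially step by step. Like the paper, you transport $\omega$ and the foliations by $\psi$, show that $\nabla^\psi$ is torsion-free, parallelizes $(\psi^{-1})^*\omega$ and preserves $\psi_*\mathcal{F}_1,\psi_*\mathcal{F}_2$, conclude by the uniqueness in Theorem~\ref{b20}, and read off affineness from the conjugation formula for $R_{\nabla^\psi}$. The differences are only cosmetic: you check the Lagrangian property via the $\perp_\omega$ characterization instead of isotropy plus half-dimension, and you verify explicitly that $\nabla^\psi$ is a connection for $\psi:M\to N$, which the paper leaves implicit.
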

\begin{proof}
	Set $\omega_N := (\psi^{-1})^*\omega$.
	Since $\psi^{-1}:N\to M$ is a diffeomorphism and $\omega$ is symplectic, the
	form $\omega_N$ is also symplectic. Indeed, it is closed because
	\[
	d\omega_N
	=
	d((\psi^{-1})^*\omega)
	=
	(\psi^{-1})^*(d\omega)
	=
	0,
	\]
	and it is non-degenerate because pull-backs by diffeomorphisms preserve
	non-degeneracy.
	
	We now show that $\psi_*\mathcal{F}_1$ and $\psi_*\mathcal{F}_2$ are
	Lagrangian foliations for $\omega_N$. Since $\psi$ is a diffeomorphism, it
	sends foliations to foliations and preserves their dimensions. Moreover, for
	$X,Y\in\Gamma(\mathcal{F}_1)$, we have
	\[
	\omega_N(\psi_*X,\psi_*Y)
	=
	((\psi^{-1})^*\omega)(\psi_*X,\psi_*Y)
	=
	\omega(X,Y)
	=
	0.
	\]
	Hence $\psi_*\mathcal{F}_1$ is isotropic. Since
	$\dim(\psi_*\mathcal{F}_1)=\dim(\mathcal{F}_1)=\frac12\dim M$, it is
	Lagrangian. The same argument applies to $\psi_*\mathcal{F}_2$.
	
	Furthermore, since
	\[
	TM=T\mathcal{F}_1\oplus T\mathcal{F}_2,
	\]
	applying $\psi_*$ gives
	\[
	TN
	=
	T(\psi_*\mathcal{F}_1)\oplus T(\psi_*\mathcal{F}_2).
	\]
	Therefore
	\[
	\left(\omega_N,\psi_*\mathcal{F}_1,\psi_*\mathcal{F}_2\right)
	\]
	is a bi-Lagrangian structure on $N$.
	
	Now let us prove that its Hess connection is $\nabla^\psi$. Recall that the
	Hess connection of a bi-Lagrangian manifold is the unique connection which is symplectic, and preserves both Lagrangian foliations.
	
	First, $\nabla^\psi$ is torsion-free. Indeed, for all $X,Y\in\mathfrak X(N)$,
	\[
	T_{\nabla^\psi}(X,Y)
	=
	\psi_*\left(
	T_\nabla(\psi_*^{-1}X,\psi_*^{-1}Y)
	\right).
	\]
	Since $T_\nabla=0$, it follows that $T_{\nabla^\psi}=0$.
	
	Next, $\nabla^\psi$ preserves the symplectic form $\omega_N$. For
	$X,Y,Z\in\mathfrak X(N)$, one has
	\[
	(\nabla^\psi_X\omega_N)(Y,Z)
	=
	(\nabla_{\psi_*^{-1}X}\omega)
	(\psi_*^{-1}Y,\psi_*^{-1}Z).
	\]
	Since $\nabla\omega=0$, we obtain
	\[
	\nabla^\psi\omega_N=0.
	\]
	
	Moreover, $\nabla^\psi$ preserves the two pushed-forward foliations. Indeed,
	if $Y\in\Gamma(\psi_*\mathcal{F}_i)$, then
	\[
	\psi_*^{-1}Y\in\Gamma(\mathcal{F}_i).
	\]
	Since $\nabla$ preserves $\mathcal{F}_i$, we have
	\[
	\nabla_{\psi_*^{-1}X}\psi_*^{-1}Y
	\in
	\Gamma(\mathcal{F}_i).
	\]
	Applying $\psi_*$ gives
	\[
	(\nabla^\psi)_X Y
	\in
	\Gamma(\psi_*\mathcal{F}_i).
	\]
	Thus $\nabla^\psi$ preserves $\psi_*\mathcal{F}_1$ and
	$\psi_*\mathcal{F}_2$.
	
	Therefore $\nabla^\psi$ is torsion-free, symplectic, and preserves the two
	Lagrangian foliations. By uniqueness of the Hess connection, $\nabla^\psi$ is
	the Hess connection of
	\[
	\left((\psi^{-1})^*\omega,\psi_*\mathcal{F}_1,\psi_*\mathcal{F}_2\right).
	\]
	
	Finally, the initial bi-Lagrangian structure is affine if and only if its Hess 	connection is flat. The curvature of $\nabla^\psi$ satisfies
	\[
	R_{\nabla^\psi}(X,Y)Z
	=
	\psi_*\left(
	R_\nabla(\psi_*^{-1}X,\psi_*^{-1}Y)\psi_*^{-1}Z
	\right).
	\]
	Therefore, $R_\nabla=0$ if and only if $R_{\nabla^\psi}=0$. Consequently,
	$(\omega,\mathcal{F}_1,\mathcal{F}_2)$ is affine if and only if
	$
	\left((\psi^{-1})^*\omega,\psi_*\mathcal{F}_1,\psi_*\mathcal{F}_2\right)
	$
	is affine.
\end{proof}

\begin{theorem}\label{Bitheo2}
	Let $(M,\omega,\mathcal{F}_1,\mathcal{F}_2)$ be a bi-Lagrangian manifold.
	Then the map
	\[
	\triangleright:
	\operatorname{Symp}(M,\omega)\times\mathcal B_l(M,\omega)
	\longrightarrow
	\mathcal B_l(M,\omega),\quad
	(\psi,(\mathcal{F}_1,\mathcal{F}_2))\mapsto
	(\psi_*\mathcal{F}_1,\psi_*\mathcal{F}_2)
	\]
	is a left group action.
	
	Moreover, the inclusion
	\[
	\triangleright\left(
	\operatorname{Symp}(M,\omega)\times\mathcal B_{lp}(M)
	\right)
	\subset
	\mathcal B_{lp}(M)
	\]
	holds, where $B_{lp}(M)$ denotes the set of affine bi-Lagrangian structure on $M$.
\end{theorem}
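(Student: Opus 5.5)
The proof splits into two parts. First, $\triangleright$ is well defined and satisfies the axioms of a left action. Second, it preserves $\mathcal B_{lp}(M)$. Both parts will be reduced to Lemma~\ref{Bilem1} applied with $N=M$ and $\psi\in\operatorname{Symp}(M,\omega)$, together with the functoriality $(\psi\circ\varphi)_*=\psi_*\circ\varphi_*$ recalled in Remark~\ref{act}.

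\emph{Well-definedness.} Let $\psi\in\operatorname{Symp}(M,\omega)$ and let $(\mathcal F_1,\mathcal F_2)$ be a pair of transverse Lagrangian foliations of $(M,\omega)$. Lemma~\ref{Bilem1} shows that $\bigl((\psi^{-1})^*\omega,\psi_*\mathcal F_1,\psi_*\mathcal F_2\bigr)$ is a bi-Lagrangian structure on $M$. Since $\psi$ is a symplectomorphism, $\psi^*\omega=\omega$, and hence $(\psi^{-1})^*\omega=\omega$. It follows that $(\psi_*\mathcal F_1,\psi_*\mathcal F_2)$ is again a pair of transverse Lagrangian foliations for the \emph{same} form $\omega$, so it lies in $\mathcal B_l(M,\omega)$. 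This identity $(\psi^{-1})^*\omega=\omega$ is precisely where the restriction from diffeomorphisms to symplectomorphisms is used. Concretely, it follows from $\omega(\psi_*X,\psi_*Y)\circ\psi=\omega(X,Y)$.

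\emph{Action axioms.} The identity map pushes every foliation forward to itself, so $\mathrm{id}\triangleright(\mathcal F_1,\mathcal F_2)=(\mathcal F_1,\mathcal F_2)$. For compatibility, use \eqref{Bieq2}: $(\psi\circ\varphi)_*\mathcal F_i=\{\psi(\varphi(\mathcal F_x))\}=\psi_*(\varphi_*\mathcal F_i)$, and at the level of sections $\Gamma((\psi\circ\varphi)_*\mathcal F_i)=\psi_*\varphi_*\Gamma(\mathcal F_i)$. Therefore $(\psi\circ\varphi)\triangleright(\mathcal F_1,\mathcal F_2)=\psi\triangleright\bigl(\varphi\triangleright(\mathcal F_1,\mathcal F_2)\bigr)$, and $\triangleright$ is a left action.

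\emph{Invariance of affine structures.} Suppose $(\omega,\mathcal F_1,\mathcal F_2)$ is affine, with Hess connection $\nabla$. By Lemma~\ref{Bilem1}, the Hess connection of $(\omega,\psi_*\mathcal F_1,\psi_*\mathcal F_2)$ is $\nabla^\psi$, and its curvature satisfies $R_{\nabla^\psi}(X,Y)Z=\psi_*\bigl(R_\nabla(\psi_*^{-1}X,\psi_*^{-1}Y)\psi_*^{-1}Z\bigr)=0$. So $\psi\triangleright(\mathcal F_1,\mathcal F_2)\in\mathcal B_{lp}(M)$. Alternatively, one can argue with Theorem~\ref{c10}: composing an adapted chart with $\psi^{-1}$ gives an adapted chart for the pushed-forward structure in which $\omega$ still has the normal form and the Christoffel symbols of $\nabla^\psi$ vanish.

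The only step that needs real care is the well-definedness step, namely keeping the symplectic form fixed. The rest is a routine consequence of naturality, already established in Remark~\ref{act} and Lemma~\ref{Bilem1}.
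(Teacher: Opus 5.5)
Your proposal is correct and follows the paper's proof essentially step for step. Well-definedness comes from Lemma~\ref{Bilem1} with $N=M$, the action axioms from $(\psi\circ\varphi)_*=\psi_*\circ\varphi_*$ as in Remark~\ref{act}, and preservation of affine structures from the curvature identity $R_{\nabla^\psi}(X,Y)Z=\psi_*\bigl(R_\nabla(\psi_*^{-1}X,\psi_*^{-1}Y)\psi_*^{-1}Z\bigr)$. Your explicit remark that $(\psi^{-1})^*\omega=\omega$ is what keeps the image inside $\mathcal B_l(M,\omega)$ fills in a point the paper leaves implicit.
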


\begin{proof}
	By Lemma~\ref{Bilem1}, the operation $\triangleright$ is well defined.
	The action properties of $\triangleright$ follow from the action properties of
	$\operatorname{Symp}(M,\omega)$ on $\mathfrak{X}(M)$, see Remark~\ref{act}.
	Furthermore, the affine condition is preserved under this action, since
	\[
	R_{\nabla}=0
	\quad\Longleftrightarrow\quad
	R_{\nabla^\psi}=0 .
	\]
	Therefore,
	\[
	\triangleright\left(
	\operatorname{Symp}(M,\omega)\times \mathcal B_{lp}(M)
	\right)
	\subseteq
	\mathcal B_{lp}(M).
	\]
	This proves Theorem~\ref{Bitheo2}.
\end{proof}





	
\end{document}